\newtheorem{thm}{Theorem}[section]
\newtheorem{cor}[thm]{Corollary}
\newtheorem{lem}[thm]{Lemma}
\newtheorem{rem}[thm]{Remark}
\theoremstyle{definition}
\newcommand{\scr}[1]{\mathscr #1}
\definecolor{wco}{rgb}{0.5,0.2,0.3}
\numberwithin{equation}{section} \theoremstyle{remark}
\newcommand{\ua}{\uparrow}
\title{{\bf   Path-Distribution Dependent SDEs: Well-Posedness and Asymptotic Log-Harnack Inequality}\footnote{Supported in part by   the National Key R\&D Program of China (No. 2022YFA1006000, 2020YFA0712900).} }
\author{
	{\bf   Feng-Yu Wang$^{a)}$, Chenggui Yuan$^{b)}$, Xiao-Yu Zhao$^{a)}$  }\\
	\footnotesize{ a)Center for Applied Mathematics, Tianjin
		University, Tianjin 300072, China}\\
	\footnotesize{wangfy@tju.edu.cn,\  zhxy\_0628@tju.edu.cn}\\
	\footnotesize{ b)Department of Mathematics, Swansea University, Bay campus, SA1 8EN, UK}\\
	\footnotesize{ c.yuan@swansea.ac.uk }
}
\begin{document}
	\allowdisplaybreaks
	\def\R{\mathbb R}  \def\ff{\frac} \def\ss{\sqrt} \def\B{\mathbf
		B}\def\TO{\mathbb T}
	\def\I{\mathbb I_{\pp M}}\def\p<{\preceq}
	\def\N{\mathbb N} \def\kk{\kappa} \def\m{{\bf m}}
	\def\ee{\varepsilon}\def\ddd{D^*}
	\def\dd{\delta} \def\DD{\Delta} \def\vv{\varepsilon} \def\rr{\rho}
	\def\<{\langle} \def\>{\rangle} \def\GG{\Gamma} \def\gg{\gamma}
	\def\nn{\nabla} \def\pp{\partial} \def\E{\mathbb E}
	\def\d{\text{\rm{d}}} \def\bb{\beta} \def\aa{\alpha} \def\D{\scr D}
	\def\si{\sigma} \def\ess{\text{\rm{ess}}}
	\def\beg{\begin} \def\beq{\begin{equation}}  \def\eed{\end{equation}}\def\F{\scr F}
	\def\Ric{{\rm Ric}} \def\Hess{\text{\rm{Hess}}}
	\def\e{\text{\rm{e}}} \def\ua{\underline a} \def\OO{\Omega}  \def\oo{\omega}
	\def\tt{\tilde}
	\def\cut{\text{\rm{cut}}} \def\P{\mathbb P} \def\ifn{I_n(f^{\bigotimes n})}
	\def\C{\scr C}      \def\aaa{\mathbf{r}}     \def\r{r}
	\def\gap{\text{\rm{gap}}} \def\prr{\pi_{{\bf m},\varrho}}  \def\r{\mathbf r}
	\def\Z{\mathbb Z} \def\vrr{\varrho} \def\ll{\lambda}
	\def\L{\scr L}\def\Tt{\tt} \def\TT{\tt}\def\II{\mathbb I}
	\def\i{{\rm in}}\def\Sect{{\rm Sect}}  \def\H{\mathbb H}
	\def\M{\scr M}\def\Q{\mathbb Q} \def\texto{\text{o}} \def\LL{\Lambda}
	\def\Rank{{\rm Rank}} \def\B{\scr B} \def\i{{\rm i}} \def\HR{\hat{\R}^d}
	\def\to{\rightarrow}\def\l{\ell}\def\iint{\int}
	\def\EE{\scr E}\def\Cut{{\rm Cut}}\def\W{\mathbb W}
	\def\A{\scr A} \def\Lip{{\rm Lip}}\def\S{\mathbb S}
	\def\BB{\mathbb B}\def\Ent{{\rm Ent}} \def\i{{\rm i}}\def\itparallel{{\it\parallel}}
	\def\g{{\mathbf g}}\def\Sect{{\mathcal Sec}}\def\T{\mathcal T}\def\V{{\bf V}}
	\def\PP{{\bf P}}\def\HL{{\bf L}}\def\Id{{\rm Id}}\def\f{{\bf f}}\def\cut{{\rm cut}}
	\def\Ss{\mathbb S}
	\def\BL{\scr A}\def\Pp{\mathbb P}\def\Pp{\mathbb P} \def\Ee{\mathbb E} \def\8{\infty}\def\1{{\bf 1}}
	\maketitle
	\def\Cc{\mathcal C} \def\t{\theta}
	\begin{abstract}	We consider stochastic different equations on $\R^d$ with coefficients depending on the path and distribution for the whole history. Under a local integrability condition on the   time-spatial singular drift, the well-posedness and Lipschitz continuity in initial values are proved, which is new even in the distribution independent case. Moreover, under a   monotone condition,   the asymptotic log-Harnack inequality is established, which extends the corresponding result of \cite{BWY19} derived in the distribution independent case.
	\end{abstract}
	\noindent
	Keywords: Path-distribution dependent SDEs, well-posedness, asymptotic log-Harnack inequality, gradient estimate.
	
	\section{Introduction}
The dimension-free Harnack inequality with power was first introduced in \cite{W97} to study the log-Sobolev inequality
on Riemanian manifolds, and has been intensively extended and applied to derive regularity estimates many for SDEs (stochastic differential equations), SPDEs (stochastic partial differential equations), path dependent SDEs, and distribution-dependent SDEs, see \cite{Wbook, RW24, R21} and references therein.
As a limit version when the power goes to infinite, the log-Harnack inequality has been introduced in
\cite{W10} to characterize the curvature lower bounded and entropy-cost estimates, and has been extended
to metric measure spaces \cite{Am, KS}. When the noise of a stochastic system is weak  such that
the log-Harnack inequality is not available, the asymptotic log-Harnack inequality
has been studied in \cite{BWY19, Xu} for path-dependent SDEs and SPDEs, which in particular implies an asymptotic gradient estimate.

In this paper, we study the well-posedness and asymptotic log-Harnack inequality   for path-distribution dependent SDEs with infinite memory.

Let $(\R^d, |\cdot |)$ be the $d$-dimensional Euclidean space for some $d\in\mathbb N$.
Denote by $\R^d\otimes\R^d$   the family of all $d\times d$-matrices
with real entries, which is  equipped with the operator norm $\|\cdot\|$.    Let  $A^*$ denote the   transpose of $A\in \R^d\otimes \R^d$, and let
$\|\cdot\|_{\infty}$ be the uniform norm for functions taking values in $\R,\R^d$ or $\R^d\otimes \R^d$.

To describe the path dependence with exponential decay  memory, {let $\C:=C((-\8,0];\R^d)$ and} for  $\tau>0$, set
\begin{align}\label{CR}
	\C_\tau=\left\{\xi\in\C:\|\xi\|_\tau:=\sup_{s\in(-\infty,0]}(\e^{\tau s}|\xi(s)|)<\8\right\}.
\end{align}
It is well known that  $(C((-\infty,0];\R^d),\|\cdot\|_\infty)$  is complete but not separable, so is   $(\C_\tau, \|\cdot\|_\tau)$ due to the  isometric  
	$$\C_\tau\ni \xi:=(\xi_s)_{s\in (-\infty,0]}\mapsto \e^{\tau\cdot}\xi:= (\e^{\tau s}\xi_s)_{s\in (-\infty,0]}\in C((-\infty,0];\R^d).$$

Let $\scr P$ and $\scr P(\C_\tau)$ be the set of all probability measures on $(\R^d,\B(\R^d))$ and $(\C_\tau,\B(\C_\tau))$, respectively, equipped with the weak topology. Let $\B_b(\C_\tau)$ be the class of bounded measurable functions on $\C_\tau$, and $\B_b^+(\C_\tau)$ the set of strictly positive functions in $\B_b(\C_\tau)$.

Let $(W(t))_{t\geq0}$ be  a $d$-dimensional Brownian motion defined on a complete filtered probability space $(\Omega,\F,(\F_t)_{t\geq0},\P)$.	 For an $\F_0$-measurable random variable $X_0:=((-\infty,0]\ni r\mapsto X(r))$   on $\C_\tau$,
we consider the following path-distribution dependent SDE with infinite memory:
\begin{align}\label{E-1}
\d X(t)=b(t,X_t,\L_{X_t})\d t+\si(t,X_t)\mathrm{d} W(t), \quad t\ge 0,
\end{align} 
where
for each fixed $t\geq 0$, $X_t(\cdot)\in\C_\tau$ is  defined by
\[X_t(r):=X(t+r),\ \ r\in(-\infty,0],\]
which is called the segment process of $X(t)$, $\L_{X_t}\in \scr P(\C_\tau)$ is the distribution of $X_t$, and
$$b:\R^+\times\C_\tau\times\scr P(\C_\tau)\to\R^d,\ \ \
\si:\R^+\times\C_\tau\to\R^d\otimes\R^d $$ are measurable. When different probability spaces are concerned,
we use $\L_{X_t|\P}$ in place of  $\L_{X_t}$ to emphasize the underline probability.

For any constant  $k\geq 0$,   let
$$\scr P_k(\C_\tau):=\big\{\mu\in\scr P(\C_\tau):\|\mu\|_k:= \mu(\|\cdot\|_{\tau}^k)^{\ff 1 k}<\8\big\},$$
where for $k=0$ we set $\mu(\|\cdot\|_{\tau}^k)^{\ff 1 k}=1$ such that $\scr P_0(\C_\tau)= \scr P (\C_\tau).$
When $k>0$, $\scr P_k(\C_\tau)$
is a complete metric space under the $L^k$-Wasserstein distance,
$$\W_k(\mu,\nu):=\sup_{N\ge 1} \inf_{\pi\in C(\mu,\nu)}\bigg(\int_{\C_\tau\times\C_\tau}\|\xi-\eta\|_{N,\tau}^k\,\pi(\mathrm{d}\xi,\mathrm{d}\eta)\bigg)^{\ff{1}{1\vee k}},\ \ \mu,\nu\in\scr P_k(\C_\tau),$$
where $C(\mu,\nu)$ is the set of all coupling of $\mu$ and $\nu$, and
$$\|\xi\|_{N,\tau}:=\sup_{s\in [-N,0]}(\e^{\tau s}|\xi(s)|),\ \ \ N\in \mathbb N.$$
To see this, let $\scr P_k(\C_{N,\tau})$ be the space of all probability measures on  $\C_{N,\tau}:=C([-N,0];\R^d)$ with finite $k$-moment of the uniform norm, let $ \mu_N\in \scr P_k(\C_{N,\tau})$ be  the marginal 
distribution on $\C_{N,\tau}$ of $\mu$, and let 
$$\W_k(\mu_N,\nu_N):=  \inf_{\pi\in C(\mu_N,\nu_N)}\bigg(\int_{\C_{N,\tau}\times\C_{N,\tau}}\|\xi-\eta\|_{N,\tau}^k\,\pi(\mathrm{d}\xi,\mathrm{d}\eta)\bigg)^{\ff{1}{1\vee k}}.$$
Since $\C_{N,\tau}$ is Polish under the norm $\|\cdot\|_{N,\tau}$, $(\scr P_k(\C_{N,\tau}), \W_k)$ is a Polish space as well, and 
\beq\label{EQ} \W_k(\mu_N,\nu_N)= \tt\W_k(\mu_N,\nu_N):=\inf_{\pi\in C(\mu,\nu)}\bigg(\int_{\C_{\tau}\times\C_{\tau}}\|\xi-\eta\|_{N,\tau}^k\,\pi(\mathrm{d}\xi,\mathrm{d}\eta)\bigg)^{\ff{1}{1\vee k}}.\end{equation} 
Since the marginal distribution on $\C_{N,\tau}\times\C_{N,\tau}$ of a coupling for $\mu$ and $\nu$ is a coupling of $\mu_N$ and $\nu_N$, we have
$$\W_k(\mu_N,\nu_N)\le \tt\W_k(\mu_N,\nu_N).$$ On the other hand, let 
$\pi_N\in \C(\mu_N,\nu_N)$ such that
$$\W_k(\mu_N,\nu_N)= \bigg(\int_{\C_{N,\tau}\times\C_{N,\tau}}\|\xi_N-\eta_N\|_{N,\tau}^k\,\pi_N(\mathrm{d}\xi_N,\mathrm{d}\eta_N)\bigg)^{\ff{1}{1\vee k}}.$$
Noting that $\C_\tau$ is separable under norm $\|\cdot\|_{\tau+1}$,   the completeness $\overline{\C_{\tau}}$ of $\C_\tau$ under this norm becomes a Polish space.
Since
$$\|\cdot\|_{\tau}=\lim_{N\to\infty}\|\cdot\|_{N,\tau},\ \ \ \tau\ge 0,$$
and $\|\cdot\|_{N,\tau}$ is continuous with respect to $\|\cdot\|_{\tau'}$ for any $\tau,\tau'\ge 0$,  we conclude that $\|\cdot\|_\tau$ and $\|\cdot\|_{\tau+1}$ induce the same Borel $\si$-field on $\overline{\C_{\tau}}$. 
So,  by extending $\mu,\nu\in\scr P_k(\C_{\tau})$ 
as probability measures on the Polish space $\overline{\C_{\tau}}$ such that $\overline{\C_{\tau}}\setminus \C_\tau$ is a null set,  $\mu$ and $\nu$   have regular conditional distributions  
$ \mu(\cdot|\xi_N)$ and $ \nu(\cdot |\xi_N)$ 
 on $C((-\infty,-N);\R^d)$ given $\xi_N\in \C_{N,\tau}$. So, for any $\xi\in \C_\tau$ letting
 $$\xi_N:= \xi|_{[-N,0]},\ \ \ \xi_N^c:= \xi|_{(-\infty,-N)},$$ the measure 
 $$\pi(\d \xi,\d\eta):= \pi_N(\d\xi_N,\d\eta_N) \mu(\d \xi_N^c|\xi_N)\nu(\d\eta_N^c|\eta_N)$$\
 is a coupling of $\mu$ and $\nu$, and
\beg{align*} &\tt\W_k(\mu_N,\nu_N)\le   \bigg(\int_{\C_{\tau}\times\C_{\tau}}\|\xi-\eta\|_{N,\tau}^k\,\pi(\mathrm{d}\xi,\mathrm{d}\eta)\bigg)^{\ff{1}{1\vee k}}\\
&=\bigg(\int_{\C_{\tau}\times\C_{\tau}}\|\xi-\eta\|_{N,\tau}^k\,\pi_N(\mathrm{d}\xi,\mathrm{d}\eta)\bigg)^{\ff{1}{1\vee k}}= \W_k(\mu_N,\nu_N).\end{align*} 
 Thus, \eqref{EQ} holds, so that   $\W_k$  is a complete metric on $\scr P_k(\C_{N,\tau})$, which trivially implies the triangle inequality of $\W_k$ on $\scr P_k(\C_\tau)$.  
 If $\{\mu^{(n)}\}_{n\ge 1}$ is a $\W_k$-Cauchy sequence in $\scr P_k(\C_\tau)$,
 then so is $\{\mu_N^{(n)}\}_{n\ge 1}$ for every $N\in\mathbb N$. Hence,  $\mu_N^{(n)}$ has a unique limit $\mu_N$ in $\scr P_k(\C_{N,\tau})$ under $\W_k$, and 
 the family $\{\mu_N\}_{N\ge 1}$ is consistent, so that by Kolmogorov's extension theorem, there exists a unique $\mu\in \scr P_k(\C_{\tau})$ with $\{\mu_N\}_{N\ge 1}$  as marginal distributions, so that
\beg{align*}&\lim_{n\to\infty} \W_k(\mu^{(n)},\mu)= \lim_{n\to\infty} \sup_{N\ge 1} \W_k(\mu_N^{(n)},\mu_N) = \lim_{n\to\infty}  \sup_{N\ge 1} \lim_{m\to\infty} \W_k(\mu_N^{(n)},\mu_N^{(m)}) \\
&\le \lim_{n\to\infty}   \lim_{m\to\infty}  \sup_{N\ge 1} \W_k(\mu_N^{(n)},\mu_N^{(m)}) = \lim_{n\to\infty}   \lim_{m\to\infty}  \W_k(\mu^{(n)},\mu^{(m)}) =0.\end{align*}
Hence, $\scr P_k(\C_\tau)$ is complete under $\W_k$.

Moreover, for any $k\ge 0$, $\scr P_k(\C_\tau)$ is a complete  metric space under the weighted variation distance		$$
\|\mu-\nu\|_{k,var}:=\sup_{f\in\B_b(\C_\tau),|f|\leq 1+\|\cdot\|_\tau^k}|\mu(f)-\nu(f)|=|\mu-\nu|(1+\|\cdot\|_\tau^k),
$$		
where	$|\mu-\nu|$ is the total variation of $\mu-\nu$.  According to \cite[Remark 3.2.1]{RW24}, for any $k>0$, there exists a constant $c>0$ such that
\begin{align}\label{vark}
\|\mu-\nu\|_{var}+\W_k(\mu,\nu)^{1\vee k}\leq c\|\mu-\nu\|_{k,var}, \ \ \mu,\nu\in\scr P_k(\C_\tau).
\end{align}Denote $\W_{k,var}=\W_k+\|\cdot\|_{k, var}$ for simplicity.

We will solve \eqref{E-1} for distribution   $\L_{X_t}$ belonging to
a subclass  $\tt{\scr P}_k(\C_\tau)\subset\scr P_k(\C_\tau)$ such that $\L_{X_t}$ is weakly continuous in $t\ge 0$.

\beg{defn} \beg{enumerate}	\item[(1)] {An adapted continuous process  $(X_t)_{t\ge 0}$  on $\C_\tau$ is called a  segment solution of \eqref{E-1} with initial value $X_0$, if $X_0$ is an $\F_0$-measurable random variable on $\C_{\tau}$,
and $(X(t):=X_t(0))_{t\ge 0}$ satisfies }
$$\int_0^t\E\big[|b(r,X_{r},\L_{X_{r}})|+  \| \si(r,X_r)\|^2\big|\F_0\big] \,\mathrm{d} r<\infty,\ \ t\ge 0,$$ and  $\P$-a.s.
$$X(t) = X(0) +\int_0^{t} b(r,X_r, \L_{X_{r}})\,\mathrm{d} r+ \int_0^{t} \si(r,X_r)\,\mathrm{d} W(r),\ \ t\ge 0.$$  In this case $(X(t))_{t\ge 0}$ is called the (strong) solution.
The SDE \eqref{E-1} is called  strongly   well-posed for   distributions in $\tt{\scr P}_k(\C_\tau)$, if for any $\F_0$-measurable $X_0$  with $\L_{X_0}\in \tt{\scr P}_k(\C_\tau)$, it has a unique segment solution with $\L_{X_t}\in  \tt{\scr P}_k(\C_\tau) $ for $t\ge 0$.
\item[(2)] A couple   $(X_t, W(t))_{t\ge 0}$   is called a weak segment solution of \eqref{E-1} with initial distribution $\mu\in \tt{\scr P_k}(\C_\tau),$  if there exists a probability space under which $W(t)$ is $d$-dimensional Brownian motion and $\L_{X_0}=\mu$, such that $(X(t):= X_t(0))_{t\ge 0}$
solves the SDE \eqref{E-1}.
We call \eqref{E-1} weakly unique, if for any two weak segment solutions  $(X_t^i, W^i(t))_{t\ge 0}$
under probabilities $\P^i$ with common initial distribution, we have $\L_{X_t^1|\P^1}=\L_{X_t^2|\P^2}$ for all $t\ge 0$. We call \eqref{E-1} weakly well-posed for distributions in $\tt{\scr P}_k(\C_\tau)$, if for any initial distribution it has a unique  weak segment solution.
\item[(3)]	The SDE \eqref{E-1}  is called well-posed  for distributions in $\tt{\scr P}_k(\C_\tau)$,  if it is both  strongly and weakly well-posed for distributions in $\tt{\scr P}_k(\C_\tau)$. In this case,  for any $\xi\in\C_\tau$ such that $\L_{\xi}\in\tt{\scr P}_k(\C_\tau)$,  let  $$P_t^*\gamma=\L_{X_t^\xi},\ \ \gamma=\L_{\xi}$$  and
denote
\begin{align*}
	P_tf(\gamma):=\E[f(X_t^\xi)]=\int_{\C_\tau}f\,\d\left\{P_t^*\gamma\right\},\ \ \gamma=\L_{\xi},\, t\geq0,\,f\in\B_b(\C_\tau),
\end{align*}
where $(X_t^\xi)_{t\geq0}$ is the unique solution to \eqref{E-1} with $X_0^\xi=\xi$.\end{enumerate}
\end{defn}

To characterize  the singularity of coefficients in time--space variables, we recall some functional spaces introduced in \cite{XXZZ}.  For any $p\ge 1$, $L^p(\R^d)$ is the class of   measurable  functions $f$ on $\R^d$ such that
$$\|f\|_{L^p(\R^d)}:=\bigg(\int_{\R^d}|f(x)|^p\,\mathrm{d} x\bigg)^{\ff 1 p}<\infty.$$
For any $p,q\geq1$ and $0\leq s<t$, let $\tt L_q^p(s,t)$ denote the class of measurable functions $f$ on $[s, t]\times\R^d$ such that
\begin{align}\label{LPQ}
\|f\|_{\tt L_q^p(s,t)}:= \sup_{z\in \R^d}\bigg( \int_{s}^{t} \|1_{B(z,1)}f_r\|_{L^p(\R^d)}^q\,\mathrm{d} r\bigg)^{\ff 1 q}<\infty,
\end{align}
where $B(z,1):= \{x\in\R^d: |x-z|\le 1\}$.

When $s=0,$ we simply   denote $$\tt L_q^p(t)= \tt L_q^p(0,t),\ \ t>0.$$
We will take $(p,q)$ from the class
\begin{align}\label{KK}
\scr K:=\Big\{(p,q): p,q\in  ({2},\infty),\  \ff d p+\ff 2 q<1\Big\}.
\end{align}

\section{The Singular Case: Well-Posedness and Lipschitz Continuity in Initial Value}
In  this section, we let $k\geq0$ and  consider \eqref{E-1} with singular drifts and  $\si(t,\xi)=\si(t,\xi(0))$.  
\subsection{Path Dependent SDEs with Infinite Memory}
In this part, we consider the following path dependent SDE with infinite memory on $\R^d$:
\beq\label{E-2}
\d X(t)= b(t,X_t)\mathrm{d} t +\si(t,X(t))\mathrm{d} W(t),\ \  t>0,\ \ X_0=\xi\in\C_\tau.
\end{equation}

To ensure the existence and uniqueness of solutions to \eqref{E-2}, we decompose $b$ as
$$b(t,\xi)= b^{(0)}(t,\xi(0))+ b^{(1)}(t,\xi),\ \ t\ge 0,\ \xi\in\C_\tau$$ and make the following assumptions
on $b^{(0)}, b^{(1)}$ and $\si$. For any $\xi\in\C_\tau$, let
$\xi^0\in \C_\tau$ be defined as
$$\xi^0(r)=\xi(0),\ \ r\le 0.$$

\beg{enumerate}
\item[($A_1$)]   $a:=\si\si^*$ is invertible with $\| a\|_\8+\| (a)^{-1}\|_\8<K$ for some constant $K>0$ and
$$		\lim_{\vv\downarrow 0}\sup_{|x-y|\leq\vv,t\in[0,T]}{\| a_t(x)-a_t(y)\|}=0, \quad T\in (0,\infty),\  x,y\in\R^d.	$$
\item[($A_2$)]
There exist constants $\{(p_i,q_i)\}_{0\leq i\leq l}\in\scr K$ with $l\geq 1$, $p_i> 2$, and functions $0\leq f_i\in \cap_{n\in\mathbb N}\tilde{L}^{p_i}_{q_i}(n)$, $0\leq i\leq l$, such that
$$|b^{(0)}|\leq f_0,\ \ \|\nn\si\|\leq\sum_{i=0}^{l}{f_i}.$$
\item[$(A_3)$]
For every $n>0$, there exists a constant $K_n>0$ such that
\begin{equation}\label{BL2}
|b^{(1)}(t,\xi)-b^{(1)}(t,\eta)|\leq K_n\|\xi-\eta\|_\tau, \ \ {t\geq0}, \|\xi\|_\tau,\|\eta\|_\tau\leq n.
\end{equation} Moreover, there exists a constant $K>0$ such that
\begin{equation}
|b^{(1)}(t,\xi)-  b^{(1)}(t,\xi^0)|\leq K(1+\|\xi\|_\tau),\ \ {t\geq0}, \xi\in\C_\tau.
\end{equation}
\end{enumerate}

\begin{thm}\label{TB1}
Assume $(A_1)$--$(A_3)$. Then \eqref{E-2} is well-posed for any initial value in $\C_\tau$, and for any constants $k,T>0 $, there exists a constant  $c >0$, such that
\begin{align}\label{EV0}			\E\left[\sup_{t\in[0,T]}\left(1+ \|X_t\|_\tau^k\right)\bigg|X_0 \right]\leq c \left(1+ \|X_0\|^k_\tau\right).	\end{align}
\end{thm}

Let $X_t^\xi$ be the segment solution with $X_0^\xi=\xi$. To ensure the Lipschitz continuity of
$X_t^\xi$ in $\xi$, we
strengthen  $(A_3)$   to

\begin{itemize}
\item[$(A_3')$]
 $\sup_{t\geq 0}|b^{(1)}(t,{\bf 0})|<\8$, and  there exist constants   $K>0$ and $\aa\in [0,1]$ such that 
\begin{align*}
&|b^{(1)}(t,\xi)-b^{(1)}(t,\eta)|\leq K\|\xi-\eta\|_\tau,\\
&	|b^{(1)}(t,\xi)-   b^{(1)}(t,\xi^0)|\leq K\left(1+\|\xi\|_\tau^{\alpha}\right), \ \ t\geq 0,\ \xi,\eta\in\C_\tau.
\end{align*}
\end{itemize}

\beg{thm}\label{TN} Assume $(A_1)$, $(A_2)$ and $(A_3')$.
Then for any constants $\vv\in (0,1)$ and $k,T\geq 1$, there exists a constant $c >0$ such that
\begin{align}\label{E01}
\E\left[\sup_{t\in[0,T]}\|X^\xi_t-X_t^\eta\|_\tau^k\right]\leq c \e^{\vv\left(\|\xi\|_\tau^{2\alpha}+\|\eta\|_\tau^{2\alpha}\right)}\|\xi-\eta\|_\tau^k,\ \ \xi,\eta\in\C_\tau.
\end{align}
\end{thm}

\subsection{Path-Distribution Dependent SDEs with Infinite Memory}

To ensure the well-posedness of solutions  to \eqref{E-1} with singular drift for distributions in
$\scr P_k(\C_\tau)$,  we decompose $b$ as
$$b(t,\xi,\mu)= b^{(0)}(t,\xi(0))+ b^{(1)}(t,\xi,\mu),\ \ t\ge 0,\ \xi\in \C_\tau,\ \mu\in \scr P_k(\C_\tau),$$ and  impose the following assumptions on
$b^{(0)}, b^{(1)}$  and $\si$. 
\beg{enumerate}
\item[$(H_1)$]
$b^{(0)}$ and $a:=\si\si^*$ satisfy $(A_1)$ and $(A_2)$.
\item[($H_2$)]
 $\sup_{t\geq0}\{|b(t,{\bf 0},\delta_{\bf 0})|+\|\sigma(t,{\bf 0})\|\}<\infty$, and  there exists a function $H\in L^1_{loc}([0,\infty);(0,\8))$ and  constants {$K >0$}, $\alpha\in[0,1]$ such that
\begin{align}\label{blip}
&  |b^{(1)}(t,\xi,\mu)-b^{(1)}(t,\eta,\nu)|^2\leq {K}\|\xi-\eta\|_\tau^2+H(t)\W_{k,var}(\mu,\nu)^2,\\
&  |b^{(1)}(t,\xi,\mu){-b^{(1)}(t,\xi^0,\mu)}|\leq {K}\left\{1+\|\xi\|_\tau^\alpha+\|\mu\|_k\right\},\notag\\
&\qquad \ \ t\ge 0,\ \xi,\eta\in\C_\tau,\ \mu,\nu\in\scr P(\C_\tau).
\end{align}

\end{enumerate}

\begin{thm}\label{T01}
Assume {$(H_1)$-$(H_2)$}. Then the SDE $\eqref{E-1}$ is well-posed for distributions in $\scr P^\alpha_{k,\e}(\C_\tau)$, where
\begin{align}\label{P-a}
\scr P^\alpha_{k,\e}(\C_\tau):=\left\{\mu\in\scr P_k(\C_\tau):\mu\left(\e^{\vv\|\cdot\|_\tau^{2\alpha}}\right)<\8\text{ for some }\vv\in(0,1)\right\}.
\end{align}Moreover, for any $n\ge 1$ and $T\in (0,\infty)$,   there exists a constant $c >0$ such that any  solution to $\eqref{E-1}$ satisfies
\begin{align}\label{xn}
\E\left[\sup_{t\in[0,T]}\|X_t\|_\tau^n\bigg| X_0\right]\leq c \left(1+\|X_0\|_\tau^n\right).
\end{align}
\end{thm}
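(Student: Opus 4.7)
The plan is a two-layer argument: for a frozen distribution flow solve the resulting path-dependent SDE with singular drift, then close a fixed point in the distribution flow. Fix a weakly continuous flow $\mu_\cdot:\R^+\to \scr P^\alpha_{k,\e}(\C_\tau)$ and consider
$$\d X(t)=b^{(0)}(t,X(t))\d t+b^{(1)}(t,X_t,\mu_t)\d t+\si(t,X(t))\d W(t).$$
Under $(H_1)$ and $(H_3)$, the Markovian singular part $b^{(0)}$ together with the singular $\nn\si$ fall exactly into the Krylov/Zvonkin framework of \cite{XXZZ}; since $b^{(1)}(\cdot,\cdot,\mu_\cdot)$ is $\|\cdot\|_\tau$-Lipschitz with growth $K(1+\|\xi\|_\tau^\alpha+\|\mu_t\|_k)$ by $(H_2)$, it is added as a path-Lipschitz perturbation that causes no new obstruction: the Zvonkin transform $Y(t)=X(t)+u(t,X(t))$ is constructed from $b^{(0)}$ and $a$ only, and Picard iteration then handles the transformed Lipschitz path-dependent drift. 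This gives strong well-posedness of the frozen equation.

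The next step is the a priori bound \eqref{xn}. Working in the Zvonkin coordinate, an application of It\^o's formula, the BDG inequality, the Krylov estimate furnished by $(H_3)$, and Khasminskii's exponential inequality yield a Gronwall-type closure of the form
$$\E\Big[\sup_{s\in[0,t]}\|X_s\|_\tau^n\,\Big|\,X_0\Big]\le c\Big(1+\|X_0\|_\tau^n+\int_0^t(1+\|\mu_r\|_k^n)\d r\Big)+c\int_0^t H(r)\E\Big[\sup_{s\in[0,r]}\|X_s\|_\tau^n\,\Big|\,X_0\Big]\d r,$$
where the $\alpha\le 1$ growth is absorbed by Young's inequality. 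Running the same scheme with the test function $\exp(\vv\|X_t\|_\tau^{2\alpha})$ in place of a polynomial, together with Khasminskii's lemma, propagates the exponential moment that defines $\scr P^\alpha_{k,\e}(\C_\tau)$, so $\L_{X_t}$ remains in this class for all $t\ge 0$.

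The third step is the fixed point. Define $\Phi(\mu_\cdot):=\L_{X^\mu_\cdot}$ on continuous flows in $\scr P^\alpha_{k,\e}(\C_\tau)$. Given two flows $\mu^1_\cdot,\mu^2_\cdot$ with common initial law, couple the corresponding solutions on one Brownian motion, apply It\^o to $|X^1(t)-X^2(t)|^2$ after the Zvonkin transform, and use $(H_2)$ to obtain
$$\E\|X^1_t-X^2_t\|_\tau^2\le C\int_0^t\Big(\E\|X^1_s-X^2_s\|_\tau^2+H(s)\W_{k,var}(\mu^1_s,\mu^2_s)^2\Big)\d s.$$
Gronwall then yields contractivity of $\Phi$ in a suitably weighted $\W_{k,var}$-metric on short intervals $[0,T_0]$; patching intervals $[nT_0,(n+1)T_0]$ provides a unique global distribution flow. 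Taking $\mu^1=\mu^2$ in the same inequality delivers strong pathwise uniqueness, which combined with existence for the frozen equation gives both strong and weak well-posedness.

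The main obstacle is the coexistence of the Markovian singular drift with the path-and-measure dependence: the Zvonkin change of variable must be shown to preserve the Lipschitz-in-path and Lipschitz-in-$\W_{k,var}$ structure of $(H_2)$ with constants that are uniform along the frozen flow, so that the contraction step really closes. Verifying that the Krylov-type controls on $\int_0^t f_i(r,X(r))^{q_i}\d r$ depend only on $\|a\|_\infty,\|a^{-1}\|_\infty$ and on $\|f_i\|_{\tt L^{p_i}_{q_i}(n)}$, and that the resulting constants enter the Gronwall step measurably and $L^1$ in time rather than merely locally boundedly, is the most delicate piece of bookkeeping.
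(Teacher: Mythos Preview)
Your first two layers (well-posedness and moment bounds for the frozen equation via Zvonkin plus Krylov--Khasminskii) are essentially the paper's Section~3 and are fine. The gap is in the fixed-point step. Your synchronous coupling estimate
\[
\E\|X^1_t-X^2_t\|_\tau^2\le C\int_0^t\Big(\E\|X^1_s-X^2_s\|_\tau^2+H(s)\W_{k,var}(\mu^1_s,\mu^2_s)^2\Big)\d s
\]
controls $\W_2(\Phi\mu^1_t,\Phi\mu^2_t)$, not $\W_{k,var}(\Phi\mu^1_t,\Phi\mu^2_t)$. Since $(H_2)$ only bounds the drift mismatch in terms of $\W_{k,var}=\W_k+\|\cdot\|_{k,var}$, the input metric contains a weighted total-variation component that a pathwise coupling can never recover: two diffusions driven by the same Brownian motion with different drifts are mutually singular at the path level, so no amount of Zvonkin-plus-It\^o bookkeeping will give you $\|\Phi\mu^1_t-\Phi\mu^2_t\|_{k,var}$ small. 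Your claimed ``contractivity in a suitably weighted $\W_{k,var}$-metric'' therefore does not follow from the displayed inequality, and the loop does not close.

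The paper fixes exactly this by a Girsanov step: with $R_t=\exp(\int_0^t\langle\zeta_s,\d W_s\rangle-\tfrac12\int_0^t|\zeta_s|^2\d s)$ built from $\zeta_s=(\si^*(\si\si^*)^{-1})(s,X^\mu(s))[b^{(1)}(s,X^\mu_s,\mu_s)-b^{(1)}(s,X^\mu_s,\nu_s)]$, one has $\L_{X^\mu_t|R_t\P}=\Phi^\gamma_t\nu$, so
\[
\|\Phi^\gamma_t\mu-\Phi^\gamma_t\nu\|_{k,var}\le \E\big[(1+\|X^\mu_t\|_\tau^k)\,|R_t-1|\big],
\]
and $\E(|R_t-1|^2|\F_0)$ is bounded via $(H_2)$ and the moment estimate \eqref{xn2}. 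This yields the $\|\cdot\|_{k,var}$-half of the contraction; your coupling argument (after Zvonkin, with the stochastic Gr\"onwall lemma) is then used only for the $\W_k$-half when $k>1$. For $k\le 1$ the paper dispenses with the coupling entirely because \eqref{vark} already gives $\W_k\le c\|\cdot\|_{k,var}$. So the missing idea is precisely this Girsanov representation of the variation distance; without it the fixed-point argument fails.
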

\section{The Monotone Case: Well-Posedness and Asymptotic log-Harnack inequality} In  this section, we  consider \eqref{E-1} with monotone coefficients and 
establish the well-posedness and asymptotic log-Harnack inequality. 
\subsection{Path Dependent SDEs with Infinite Memory}

Note that when the SDE \begin{equation}\label{E3}
\d X(t)= b(t,X_t)\mathrm{d} t +\si(t,X_t)\mathrm{d} W(t),\ \  t>0,\ \ X_0=\xi\in\C_\tau.
\end{equation} is time-homogenous and monotone,  the asymptotic log-Harnack inequality has been derived in \cite[Theorem 3.1]{BWY19} for the Markov semigroup
$$
P_tf(\xi):=\E \big[f(X_t^\xi)\big], \quad\ \  t \ge 0, \    \xi\in \C_\tau, \  f\in \B_b(\C_\tau).
$$
We  call $\mu$ an invariant probability measure of $P_t$, if it is a probability measure on $\C_\tau$ such that
$$\int_{\C_\tau} P_t f\,\d\mu= \int_{\C_\tau}f\,\d\mu,\ \ f\in \B_b(\C_\tau).$$
An important application of this inequality is the gradient estimate.
For any function $f$ on $\C_\tau$, let
$$\|\nn f(\xi)\|_\infty:=\sup_{\eta\in \C_\tau\setminus\{\xi\}}\ff{|f(\xi)-f(\eta)|}{\|\xi-\eta\|_\tau}$$
be the Lipschitz constant of a function $f$ at point $\xi\in\C_\tau$, which coincides with the   norm of the gradient   $\nn f(\xi)$ if $f$ is G\^ateaux differentiable at $\xi$. We denote $f\in C_{b,L}(\C_\tau)$ if $$\|f\|_\infty+\|\nn f\|_\infty<\infty.$$
Since the proof of \cite[Theorem 3.1 and Theorem A.1]{BWY19} apply also to the time-inhomogenous case which is crucial in the study of distribution dependent SDEs, we reformulate this result and its consequence  for \eqref{E3} with time dependent coefficients without proof.

\beg{enumerate}\item[$(B)$] $b(t,\cdot)\in C(\C_\tau{;\R^d})$ for each $t\ge 0$, $\si$ is invertible, and there exists a constant $K>0$ such that 
\beg{align*}
&\|\si(t,\xi)-\si(t,\eta)\|^2+\<b(t,\xi)-b(t,\eta), \xi(0)-\eta(0)\>^+\le K\|\xi-\eta\|_\tau^2,\\
&\|\si(t,\xi)\|+\|\si(t,\xi)^{-1}\|+|b(t,{\bf 0})|\le K,\ \ \ t\ge 0,\ \xi,\eta\in \C_\tau,\end{align*}
where ${\bf 0}\in \C_\tau$ with ${\bf 0}(r)=0, r\le 0.$
\end{enumerate}

\begin{thm}\label{TB2} Under assumption $(B)$, for any $\xi\in\C_\tau$, the SDE $\eqref{E3}$
has a unique solution with $X_0=\xi$ such that $X_t^\xi$ is a Markov process on $\C_\tau$.
Moreover,  for any $\tau_0\in(0,\tau)$, there exists a constant $c>0$ such that for any $t\ge 0$, $\xi,\eta\in\C_\tau$ and $f\in\B^+_b(\C_\tau)$ { with $\|\nn\log f\|_\8<\8$},
\begin{align}\label{AL1}& \E\big[\|X_t^\xi\|_\tau^2\big]\le c\e^{ct} \big(1+\|\xi\|_\tau^2\big),\notag\\
&P_t\log f(\eta)\leq \log P_tf(\xi)+
c\|\xi-\eta\|_\tau^2+c\e^{-\tau_0t}\|\nn\log f\|_\8 \|\xi-\eta\|_\tau.
\end{align}

\end{thm}

The following result is a direct consequence of  Theorem \ref{TB2} and \cite[Theorem 2.1]{BWY19} with
\beq\label{GM}  E=\C_\tau,\ \ \ \rho(\xi,\eta)=\|\xi-\eta\|_\tau,\ \ \  \Gamma_t(\xi)=c\e^{  -\tau_0t},\ \ \ \LL(\xi)=c.\end{equation}

\beg{cor} In the situation of Theorem $\ref{TB2}$,
the following assertions hold.
\beg{enumerate} 	\item[$(1)$]  For any $t\ge 0,\xi\in \C_\tau$ and $f\in  C_{b,L}(\C_\tau)$,
$$
|\nn P_t f(\xi)|  \leq \ss{2c[P_t f^2-(P_tf)^2)]} (\xi)  +c\e^{-\tau_0 t} \|\nn f\|_\infty.$$
\item[$(2)$] When the coefficients do not depend on $t$,
$P_t$ has at most one invariant probability measure, and if   $\mu$ is its invariant probability measure,
then
\begin{align*}
\limsup_{t\to\8}P_t f(\xi)\leq\log\left(\ff{\mu(\e^{f})}{\int_{\C_\tau}\e^{-c\|\xi-\eta\|_\tau^2}\mu(\d \eta)} \right), \ \ \xi\in\C_\tau,\ f\in \B_b^+(\C_\tau).
\end{align*}
Consequently, for any closed set $A\subset\C_\tau$ with $\mu(A)=0$,
\begin{align*}
\lim_{t\to\8}P_t\1_A(\xi)=0.\ \ \xi\in\C_\tau.
\end{align*}
\item[$(3)$]  Let $\xi\in\C_\tau$ and $A\subset\C_\tau$ be a measurable set such that
\begin{align*}
\delta(\xi,A):= \liminf_{t\to\8}P_t(\xi,A)>0.
\end{align*}
Then for any $\vv>0$ and $A_\vv:=\{\eta\in\C_\tau:\|\eta-\xi\|<\vv,\xi\in A\}$,
\begin{align*}
\liminf_{t\to\8}P_t(\eta,A_\vv)>0,\ \ \eta\in\C_\tau.
\end{align*}
Moreover precisely,  for any $\vv_0\in(0,\delta(\xi,A))$, there exists a constant $t_0>0$ such that for any
$\eta\in \C_\tau$ and $\vv>0$,
\begin{align*}
\inf\bigg\{P_t(\eta, A_\vv):\ t>t_0\lor \Big(\ff 1 {\tau_0}\log\ff{\LL(\eta)\|\xi-\eta\|_\tau}{\vv \vv_0}\Big)\bigg\}>0,\ \ \eta\in \C_\tau.
\end{align*}\end{enumerate}
\end{cor}
\subsection{Path-Distribution Dependent SDEs with Infinite Memory}
we assume that the following monotone assumption holds,
as in \cite{BWY19} in the distribution-free case.

\beg{enumerate} \item[$(H')$] $\sup_{t\ge 0}|b(t,{\bf 0},\dd_{\bf 0})|<\8$, and 
$b(t,\cdot,\mu)\in C(\C_\tau;\R^d)$ holds for any $t\ge 0$, $\mu\in \scr P(\C_\tau)$. Moreover,   there exist constants $K_1, K_2>0$ such that
and 
\beg{align*}&\<b(t,\xi,\mu)-b(t,\eta,\nu),\xi(0)-\eta(0)\>^++\|\si(t,\xi)-\si(t,\eta)\|^2\\
&\le K_1\|\xi-\eta\|_\tau^2+K_2\W_2(\mu,\nu)^2,
\ \ t\ge 0,\ \xi,\eta\in\C_\tau,\ \mu,\nu\in \scr P_2(\C_\tau).\end{align*}
\end{enumerate}
\begin{thm}\label{T02}
Assume $(H')$. Then the following assertions hold.
\beg{enumerate}\item[$(1)$]  The SDE \eqref{E-1} is well-posed for distributions in $\scr P_2(\C_\tau)$, 
and there exists a constant $c>0$ such that
\beq\label{*N} \W_2(P_t^*\mu_0,P_t^*\nu_0)\le c\e^{ct} \W_2(\mu_0,\nu_0),\ \ t\ge 0,\ \mu_0,\nu_0\in \scr P_2(\C_\tau).\end{equation}
\item[$(2)$] If $\si$ is invertible with $\|\si\|_\infty+\|\si^{-1}\|_\infty<\infty$, then 
for any $\tau_0\in(0,\tau)$, there exists a  constant $c\ge 1$ such that
\begin{equation}\label{log1}
\begin{aligned}
P_t\log f(\nu)\leq & \log P_tf(\mu)+
{ cK_2\e^{ct}}\W_{2}(\mu,\nu)^2+c\e^{-\tau_0t}\|\nn\log f\|_\8\W_{2}(\mu,\nu)
\end{aligned}
\end{equation}
holds for $t\geq0$, $\mu,\nu\in\scr P_{2}(\C_\tau)$ and $f\in\B^+_b(\C_\tau)$ {with $\|\nn\log f\|_\8<\8$}.   
Consequently, for any $t>0$ and $f\in C_{b,L}(\C_\tau)$,
\begin{align}\label{lip}
|P_tf(\mu)-P_tf(\nu)|\le \W_{2}(\mu,\nu) \big[ c\e^{-\tau_0t}\|\nn f\|_\8+ {2\ss{cK_2\e^{ct}}}\|f\|_\8\big].
\end{align}\end{enumerate}
\end{thm}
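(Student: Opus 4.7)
\textbf{Part (1): well-posedness and \eqref{*N}.} My strategy is the standard freeze-the-distribution/fixed-point. Given a continuous curve $(\bar\mu_t)_{t\ge 0}\subset\scr P_2(\C_\tau)$, consider the classical path-dependent SDE $\d X(t)=b(t,X_t,\bar\mu_t)\,\d t+\sigma(t,X(t))\,\d W(t)$. Under $(H')$ the one-sided monotonicity, Lipschitz continuity of $\sigma$ and the bound $\sup_t|b(t,\mathbf 0,\delta_{\mathbf 0})|\le K$ yield existence, strong uniqueness and $L^2$-segment moment bounds by Itô--Gronwall. Then I show that the map $\Phi:(\bar\mu_t)\mapsto(\L_{X^{\bar\mu}_t})$ is a strict contraction on $C([0,T];\scr P_2(\C_\tau))$ equipped with the weighted metric $\sup_{t\le T}\e^{-Lt}\W_2(\bar\mu_t,\bar\nu_t)$ for $L$ large (depending on $K,\tau$), giving a unique fixed point. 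For \eqref{*N}, synchronously couple $X^{\mu_0},X^{\nu_0}$ on the same $W$; Itô on $|X^{\mu_0}(t)-X^{\nu_0}(t)|^2$, the monotonicity in $(H')$, and the trivial bound $\W_2(P_t^*\mu_0,P_t^*\nu_0)^2\le\Ee\|X^{\mu_0}_t-X^{\nu_0}_t\|_\tau^2$ close a Gronwall inequality that delivers \eqref{*N}.

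\textbf{Part (2): asymptotic coupling and Girsanov.} With the flows $P_t^*\mu,P_t^*\nu$ from Part (1) in hand, on a common filtered space I let $X$ solve \eqref{E-1} with $\L_{X_0}=\mu$ and construct $\tilde Y$ with $\L_{\tilde Y_0}=\nu$ coupled optimally to $X_0$, as the strong solution of
\[\d\tilde Y(t)=\bigl[b(t,\tilde Y_t,P_t^*\nu)-\lambda(\tilde Y(t)-X(t))\bigr]\,\d t+\sigma(t,\tilde Y(t))\,\d W(t)\]
for a large constant $\lambda>0$. Setting $\xi(t)=\lambda\sigma^{-1}(t,\tilde Y(t))(\tilde Y(t)-X(t))$ and $R_t=\exp\bigl(\int_0^t\<\xi,\d W\>-\tfrac12\int_0^t|\xi|^2\d s\bigr)$, Novikov follows from $\|\sigma^{-1}\|_\infty<\infty$ together with moment control on $|\tilde Y-X|$; under $\Q:=R_T\P$ the process $\tilde W_t:=W_t-\int_0^t\xi\,\d s$ is Brownian and $\tilde Y$ satisfies $\d\tilde Y=b(t,\tilde Y_t,P_t^*\nu)\d t+\sigma\,\d\tilde W$ with $\L_{\tilde Y_0|\Q}=\nu$. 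Since $X^\nu$ under $\P$ also solves this frozen SDE with the same initial law, pathwise uniqueness of the frozen equation forces $\L_{\tilde Y_t|\Q}=\L_{X^\nu_t|\P}$, so $\Ee_\Q\log f(\tilde Y_t)=P_t\log f(\nu)$.

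\textbf{Decay, entropy cost, and log-Harnack.} Under $\P$, Itô on $|\tilde Y-X|^2$ and $(H')$ give
\[\d|\tilde Y-X|^2\le\bigl[2K\|\tilde Y_t-X_t\|_\tau^2+2K\W_2(P_t^*\mu,P_t^*\nu)^2-2\lambda|\tilde Y-X|^2\bigr]\,\d t+\d M.\]
Using \eqref{*N} to bound the Wasserstein term, splitting the segment norm into its pre-zero part (damped by $\e^{-\tau t}$ from the weight) and its post-zero part, and taking $\lambda$ larger than $K+\tau$, a Gronwall argument yields $\Ee\|\tilde Y_t-X_t\|_\tau^2\le C\e^{-2\tau_0 t}\W_2(\mu,\nu)^2$ for any $\tau_0\in(0,\tau)$. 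A parallel calculation under $\Q$ (where $X$ acquires an extra drift $\sigma\xi$) gives the entropy cost
\[\Ee_\Q\log R_t=\tfrac12\Ee_\Q\int_0^t|\xi|^2\,\d s\le\tfrac{\lambda^2\|\sigma^{-1}\|_\infty^2}{2}\Ee_\Q\int_0^t|\tilde Y(s)-X(s)|^2\,\d s\le c\e^{ct}\W_2(\mu,\nu)^2.\]
Combining via $P_t\log f(\nu)=\Ee_\Q\log f(\tilde Y_t)\le\Ee_\Q\log f(X_t)+\|\nn\log f\|_\infty\Ee_\Q\|\tilde Y_t-X_t\|_\tau$ and the entropy variational inequality $\Ee_\Q\log f(X_t)\le\log\Ee_\P f(X_t)+\Ee_\Q\log R_t=\log P_tf(\mu)+\Ee_\Q\log R_t$ produces \eqref{log1}.

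\textbf{Gradient estimate and main obstacle.} For \eqref{lip}, apply \eqref{log1} to $f=\e^{\kappa g}$ with $g\in C_{b,L}(\C_\tau)$ and $\kappa>0$: the Taylor bound $\log P_t\e^{\kappa g}(\mu)\le\kappa P_tg(\mu)+\tfrac{\kappa^2}{2}\|g\|_\infty^2$, division by $\kappa$ and AM--GM optimization over $\kappa$ yield $P_tg(\nu)-P_tg(\mu)\le\sqrt{2c\e^{ct}}\|g\|_\infty\W_2(\mu,\nu)+c\e^{-\tau_0 t}\|\nn g\|_\infty\W_2(\mu,\nu)$; symmetrizing $(\mu,\nu)$ then gives \eqref{lip}. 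The main technical obstacle I anticipate is the decay estimate for $\Ee\|\tilde Y_t-X_t\|_\tau^2$: the segment norm blends pre-time-zero initial data (damped only as $\e^{-\tau t}$ by the weight $\e^{\tau s}$) with the coupled post-zero difference, and the growing control $\W_2(P_t^*\mu,P_t^*\nu)^2\le c^2\e^{2ct}\W_2(\mu,\nu)^2$ from \eqref{*N} must be absorbed --- balancing these forces a careful choice of $\lambda$ large enough to produce a net rate at most $-2\tau_0$ in the Gronwall inequality while keeping the resulting entropy cost at the $\e^{ct}$-level appearing in \eqref{log1}.
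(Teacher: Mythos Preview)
Your Part~(1) is essentially the paper's argument. The problem is in Part~(2): the decay estimate $\E\|\tilde Y_t-X_t\|_\tau^2\le C\e^{-2\tau_0 t}\W_2(\mu,\nu)^2$ cannot be obtained from your coupling, for any choice of $\lambda$. In your It\^o inequality the source term $2K\,\W_2(P_t^*\mu,P_t^*\nu)^2$ is, by \eqref{*N}, only controlled by $c'\e^{2ct}\W_2(\mu,\nu)^2$. Duhamel then gives
\[
\E|\tilde Y(t)-X(t)|^2 \;\gtrsim\; \int_0^t \e^{-2\lambda(t-s)}\,\e^{2cs}\,\d s\;\W_2(\mu,\nu)^2
\;=\;\frac{\e^{2ct}-\e^{-2\lambda t}}{2(\lambda+c)}\,\W_2(\mu,\nu)^2,
\]
which grows like $\e^{2ct}$ no matter how large $\lambda$ is. The growth of the distribution term is not a ``balance'' problem that a large $\lambda$ can fix; $\lambda$ damps the current difference but does nothing to the exogenous forcing $\W_2(P_s^*\mu,P_s^*\nu)^2$. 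The same obstruction appears if you run the computation under $\Q$, since under $\Q$ the drift of $X$ still carries $b(\cdot,\cdot,P_t^*\mu)$ while $\tilde Y$ carries $b(\cdot,\cdot,P_t^*\nu)$. Consequently the $\e^{-\tau_0 t}$ factor in front of $\|\nabla\log f\|_\infty$ in \eqref{log1} is unreachable by your single--Girsanov scheme.

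The paper resolves this by a \emph{two-step} Girsanov transform. First, with $\bar\zeta_s=\{\sigma^*(\sigma\sigma^*)^{-1}\}(s,X^\mu(s))\big[b(s,X_s^\mu,\mu_s)-b(s,X_s^\mu,\nu_s)\big]$ one passes to $\bar\P=\bar R_t\P$, under which $X^\mu$ solves the SDE with the \emph{frozen} measure $\nu_s$; by $(H')$ and \eqref{*N} one has $|\bar\zeta_s|\le c\e^{cs}\W_2(\mu,\nu)$, and this is precisely the origin of the $c\e^{ct}\W_2(\mu,\nu)^2$ entropy term. Second, one introduces the coupling process $Y$ with the additional drift $\kappa\,\sigma(s,Y(s))\sigma(s,X^\mu(s))^{-1}(X^\mu(s)-Y(s))$ and removes it via $\tilde R_t$. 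Under the final measure $\Q=\tilde R_t\bar R_t\P$, both $X^\mu$ and $Y$ have drift $b(\cdot,\cdot,\nu_s)$ with the \emph{same} measure argument, so the Wasserstein source term disappears and the pure path-dependent decay of \cite{BWY19} yields $\E_\Q[\|X_t^\mu-Y_t\|_\tau^p\,|\,\F_0]\le c\e^{-p\tau_0 t}\|X_0^\mu-X_0^\nu\|_\tau^p$. Note also the specific form of the coupling drift: it is chosen so that after Girsanov the extra drift picked up by $X^\mu$ is exactly $-\kappa(X^\mu-Y)$, cleanly dissipative; your choice $-\lambda(\tilde Y-X)$ would instead load $X$ with $\lambda\,\sigma(X)\sigma(\tilde Y)^{-1}(\tilde Y-X)$, whose inner product with $\tilde Y-X$ is not sign-definite without further work.
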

 \begin{rem} Note that 
\eqref{log1} implies  \eqref{AL1} for $\mu=\dd_{\xi}$, $\nu=\dd_{\eta}$ and  $K_2=0$.
\end{rem}

\section{Proof of Theorem \ref{TB1} Theorem \ref{TN}}

To prove  these two theorems , we present some lemmas below.

\smallskip
\begin{lem}\label{LB1}
Assume $(A_1)$, $(A_2)$,  $(A_3')$  and $\|b^{(1)}\|_\8<\8$.
Then for any initial value $\xi\in\C_\tau$, \eqref{E-2} has a unique non-explosive strong solution satisfying \eqref{EV0}, and for any $T\in (0,\infty)$ there exists a constant $c>0$ such that
\begin{align}\label{SUPE1}
{\E\left[\sup_{t\in[0,T]} \big|X^{\xi}(t)-X^{\eta}(t)\big|\right]\le c\|\xi-\eta\|_\tau}, \ \ \ \xi,  \eta\in\C_\tau.
\end{align}
\end{lem}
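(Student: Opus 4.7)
The plan is to reduce \eqref{E-2} to a path-dependent SDE with globally Lipschitz spatial coefficients via Zvonkin's transformation, and then close a Picard / Gronwall argument in the exponentially weighted norm $\|\cdot\|_\tau$.

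First, under $(A_1)$ the PDE estimates from \cite{XXZZ} yield, for sufficiently large $\lambda>0$, a solution $u:[0,T]\times\R^d\to\R^d$ of
\[\pp_t u+\ff12\mathrm{tr}(a D^2 u)+b^{(0)}\cdot\nn u-\lambda u=-b^{(0)},\qquad u(T,\cdot)=0,\]
satisfying $\sup_{[0,T]\times\R^d}(|u|+\|\nn u\|)\le \ff12$, so that $\phi_t(x):=x+u(t,x)$ is a bi-Lipschitz $C^1$-diffeomorphism of $\R^d$ uniform in $t\in[0,T]$. The generalized Itô formula of \cite{XXZZ} then shows that $X$ solves \eqref{E-2} if and only if $Y(t):=\phi_t(X(t))$ solves
\[\d Y(t)=\big[\lambda u(t,X(t))+(I+\nn u)(t,X(t))\,b^{(1)}(t,X_t)\big]\d t+(I+\nn u)(t,X(t))\sigma(t,X(t))\,\d W(t),\]
whose spatial dependence is Lipschitz after re-expressing $X(t)=\phi_t^{-1}(Y(t))$.

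To construct a solution I would run a Picard iteration. Set $X^{(0)}(t)\equiv\xi(0)$ for $t\ge 0$ with past segment equal to $\xi$, and given $X^{(n)}$ define $X^{(n+1)}$ as the unique strong solution of
\[\d X^{(n+1)}(t)=\big[b^{(0)}(t,X^{(n+1)}(t))+b^{(1)}(t,X^{(n)}_t)\big]\d t+\sigma(t,X^{(n+1)}(t))\,\d W(t),\ \ X^{(n+1)}_0=\xi,\]
which exists and is unique by \cite{XXZZ}, because $b^{(1)}(t,X^{(n)}_t)$ is a bounded adapted perturbation of the singular drift $b^{(0)}$. Applying $\phi$ to both $X^{(n+1)}$ and $X^{(n)}$, using Lipschitz continuity of $b^{(1)}$ from $(A_2')$ together with the bi-Lipschitz property of $\phi$, and invoking Burkholder--Davis--Gundy and Gronwall, one gets
\[\E\sup_{s\in[0,t]}|X^{(n+1)}(s)-X^{(n)}(s)|^2\le c\int_0^t \E\|X^{(n)}_s-X^{(n-1)}_s\|_\tau^2\,\d s.\]
Since the past segments coincide with $\xi$, $\|X^{(n)}_s-X^{(n-1)}_s\|_\tau=\sup_{r\in[0,s]}|X^{(n)}(r)-X^{(n-1)}(r)|$, and iteration gives contraction in $L^2(\Omega;C([0,T];\R^d))$. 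Pathwise uniqueness is obtained by the same computation applied to two solutions.

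The moment bound \eqref{EV0} is then obtained by Itô's formula applied to $(1+|\phi_t(X(t))|^2)^{k/2}$, using $\|b^{(1)}\|_\8<\8$, boundedness of $\nn u$, and Khasminskii's lemma to absorb the Krylov integral of the singular terms from $(H_3)$; the exponential decay in $\|\cdot\|_\tau$ gives $\|X_t\|_\tau^k\le \e^{-k\tau t}\|\xi\|_\tau^k+\sup_{s\in[0,t]}|X(s)|^k$, which closes the estimate. For \eqref{SUPE1}, I would apply $\phi$ to both $X^\xi$ and $X^\eta$, write the SDE for $\phi_\cdot(X^\xi)-\phi_\cdot(X^\eta)$, and use $(A_2')$ to dominate
\[|b^{(1)}(t,X^\xi_t)-b^{(1)}(t,X^\eta_t)|\le K\|X^\xi_t-X^\eta_t\|_\tau\le K\big(\e^{-\tau t}\|\xi-\eta\|_\tau+\sup_{s\in[0,t]}|X^\xi(s)-X^\eta(s)|\big);\]
Gronwall in the weighted norm then delivers the bound $c\|\xi-\eta\|_\tau$. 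The main technical obstacle is that Zvonkin's transformation acts only on the spatial variable while the perturbation $b^{(1)}(t,X_t)$ is segment-dependent, so one must carry both $X$ and its image $Y$ through the estimates; the bi-Lipschitz constant of $\phi$ handles the translation uniformly, and the exponential weight $\e^{\tau s}$ in $\|\cdot\|_\tau$ is exactly what is needed to close the Gronwall loop despite the infinite memory.
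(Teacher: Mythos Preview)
Your overall strategy via Zvonkin's transform is correct and is exactly how the paper (through the cited \cite[Lemma~4.1]{Z24}) proceeds, but there is a real gap in the contraction and Lipschitz steps. You assert that after the transform the ``spatial dependence is Lipschitz'', and then run a plain BDG + Gronwall argument. This is false under $(A_1)$: the transformed diffusion is $(I+\nn u)\si$, and by $(H_3)$ one only has $\|\nn\si\|\le \sum_i f_i\in \tt L^{p_i}_{q_i}$ and $\|\nn^2 u\|_{\tt L^{p_0}_{q_0}}<\infty$, so neither $\nn u$ nor $\si$ is globally Lipschitz. Consequently the difference $(I+\nn u)\si(t,X^{(n+1)}(t))-(I+\nn u)\si(t,X^{(n)}(t))$ in your Picard step, and likewise $(I+\nn u)\si(t,X^\xi(t))-(I+\nn u)\si(t,X^\eta(t))$ in the proof of \eqref{SUPE1}, cannot be bounded by a constant times the pointwise distance; the same issue hits the drift factor $\nn u(t,X^{(n+1)}(t))-\nn u(t,X^{(n)}(t))$ multiplying the bounded $b^{(1)}$.

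What is actually needed, and what the paper uses in the companion estimate (see the proof of Theorem~\ref{TN}), is the pointwise maximal-function bound \eqref{HH1} to control these Sobolev differences by $|X^\xi(t)-X^\eta(t)|$ times a random factor $g(t)$ built from $\M\|\nn^2 u\|$ and $\M\|\nn\si\|$ evaluated along both trajectories. One then applies It\^o's formula to $\e^{-A(t)}|Y^\xi(t)-Y^\eta(t)|^{2k}$ with $A(t)=c\int_0^t(1+g(s)^2)\d s$, uses the Krylov--Khasminskii exponential estimate (your ``Khasminskii's lemma'', which you invoke only for \eqref{EV0}) to control $\E\e^{\beta A(T)}$, and closes with the stochastic Gr\"onwall inequality rather than the deterministic one. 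Your identification of the segment/spatial mismatch as the main obstacle is secondary; the bi-Lipschitz property of $\phi$ indeed handles that, but it does nothing for the Sobolev-only regularity of $\nn u$ and $\si$, which is the genuine difficulty.
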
\beg{proof} The desired estimate follows from the proof
of \cite[Lemma 4.1]{Z24}  with $\|\cdot\|_\tau$ in place of uniform norm $\|\cdot\|_\C$ on the path space with finite time interval.  \end{proof}

Next, consider the local Hardy-Littlewood maximal function for a nonnegative function $f$ on $\R^d$:
\begin{align}\label{max}
\M f(x):=\sup_{r\in(0,1)}\frac{1}{|B(0,r)|}\int_{B(0,r)}f(x+y)\,\mathrm{d} y,\ \   x\in \R^d,
\end{align}
where $|B(0,r)|$ is the volume of $B(0,r):=\{y: |y|<r\}$.
The following result is taken from \cite[Lemma 2.1]{XZ20}.
\smallskip
\begin{lem} \label{Hardy}
\beg{enumerate}\item[$(1)$] There exists a constant $c>0$ such that for any   $f\in C_b(\R^d)$ with $|\nn f|\in L^1_{loc}(\R^d)$,
\beq\label{HH1}
|f(x)-f(y)|\leq c|x-y|(\M |\nabla f|(x)+\M |\nabla f|(y)+\|f\|_\8),\ \    x,y\in\R^d.\end{equation}
\item[$(2)$] For any     $T,p,q\in (1,\infty),  $ there exists a constant $c>0$ such that
\beq\label{HH2}
\|\M f\|_{\tilde{L}_q^p(T)}\leq c\|f\|_{\tilde{L}_q^p(T)},\ \ f\in \tilde{L}_q^p(T).
\end{equation}\end{enumerate}
\end{lem}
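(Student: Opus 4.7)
The plan for Lemma \ref{Hardy} is to follow the classical real-variable arguments behind the Hardy--Littlewood maximal function, with minor adjustments to accommodate the truncation at radius one in the definition \eqref{max}. Both statements are essentially textbook, but the localization in $\M$ forces a careful dichotomy in part (1) and a covering argument in part (2).

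For part (1), I would split on $|x-y|$. If $|x-y|\ge \ff12$, then $|f(x)-f(y)|\le 2\|f\|_\8\le 4|x-y|\,\|f\|_\8$, which is absorbed into the $\|f\|_\8$ contribution on the right-hand side of \eqref{HH1}. If $|x-y|<\ff12$, set $r=|x-y|$ and $B=B(\ff{x+y}{2},r)\subset B(x,2r)\cap B(y,2r)$, noting $2r<1$. Writing $f_B:=\ff{1}{|B|}\int_B f(z)\d z$ and invoking the pointwise identity $f(x)-f(z)=-\int_0^1(z-x)\cdot\nn f(x+t(z-x))\d t$, I would average over $z\in B$ and change to polar coordinates centered at $x$ (substituting $u=t|z-x|$) to obtain the Morrey-type bound $|f(x)-f_B|\le C\,r\,\M|\nn f|(x)$; the crucial point is that only values of $|\nn f|$ on balls around $x$ of radius $\le 2r<1$ enter, matching the truncation in \eqref{max}. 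The symmetric inequality at $y$ together with the triangle inequality then gives \eqref{HH1}.

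For part (2), I would use that $\M f(x)$ depends only on the values of $f$ on $B(x,1)$, so for any $z\in\R^d$,
\[
\1_{B(z,1)}(x)\,\M f_r(x)\le \1_{B(z,1)}(x)\,\M\bigl[\1_{B(z,2)}f_r\bigr](x).
\]
The classical $L^p(\R^d)$-boundedness of the full Hardy--Littlewood maximal function applied slice-wise in time then yields $\|\1_{B(z,1)}\M f_r\|_{L^p(\R^d)}\le C\|\1_{B(z,2)}f_r\|_{L^p(\R^d)}$. Covering $B(z,2)$ by a uniformly bounded number of unit balls $B(z_j,1)$ with $z_j$ depending on $z$ only by translation, raising to the $q$-th power, integrating over $r\in[0,T]$, and finally taking the supremum in $z$ gives \eqref{HH2}.

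The main obstacle, such as it is, is purely bookkeeping: aligning the averaging scale in part (1) with the radius-one cutoff in \eqref{max}, which is handled by the split at $|x-y|=\ff12$, and uniformizing the covering constants in part (2) so that they are independent of $z$. Since no new ideas beyond classical real-variable theory are needed, the proof is essentially a direct reference to \cite[Lemma 2.1]{XZ20}.
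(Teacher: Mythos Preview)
Your proposal is correct and aligns with the paper: the paper does not give any argument for Lemma \ref{Hardy} but simply records it as taken from \cite[Lemma 2.1]{XZ20}, which is exactly the reference you invoke at the end. Your sketch of the underlying real-variable argument (the dichotomy on $|x-y|$ in part (1) and the localization plus covering in part (2)) is sound and more detailed than anything the paper provides.
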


The lemma below examines the exponential integrability of functionals for segment process.

\begin{lem}\label{EXPB}
Assume $(A_1)$, $(A_2)$ and $(A_3')$. For any $T\in (1,\infty)$,
there exist  constants $\beta_T,k_T >0$
such that  for any   solution $(X_t)_{t\in [0,T]}$ to \eqref{E-2},
\begin{align}\label{EXPB1}
\E\left[\exp\left(\beta\int_{s}^{t}\| X_u\|_\tau^{2\alpha}\,\mathrm{d} u\right)\bigg|\F_{s}\right]\leq \e^{k_T\bb \left(1+\|X_s\|_\tau^{2\alpha}\right)}
\end{align}
holds for any $0\leq\bb\leq\bb_T$ and $0\leq s\leq t\leq T$.
\end{lem}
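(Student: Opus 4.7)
The plan is to establish \eqref{EXPB1} by a Khasminskii-type iteration, built on top of a polynomial moment estimate for $\sup_{r\le t}|X(r)|$ and the memory-decay structure of the segment norm. By the strong uniqueness of solutions to \eqref{E-2} under $(A_1)$ and $(A_2')$ (Lemma \ref{LB1} together with a standard truncation to bypass the boundedness hypothesis on $b^{(1)}$), the segment process $(X_t)_{t\ge 0}$ is Markov on $\C_\tau$, so the conditional bound at general $s\ge 0$ reduces to the unconditional one at $s=0$ applied with initial datum $X_s$.

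The key structural ingredient is the memory-decay identity for the segment norm: splitting the supremum defining $\|X_u\|_\tau$ over $r\le -u$ (history, where $X(u+r)=X_0(u+r)$) and $r\in[-u,0]$ (forward path) yields
$$\|X_u\|_\tau\le e^{-\tau u}\|X_0\|_\tau+\sup_{r\in[0,u]}|X(r)|,\ \ u\ge 0,$$
so that, using $2\alpha\le 2$, $\|X_u\|_\tau^{2\alpha}\le C(\|X_0\|_\tau^{2\alpha}+\sup_{r\in[0,u]}|X(r)|^{2\alpha})$. Applying It\^o's formula to $|X(t)|^2$ and using $(A_1)$ (bounded $\sigma\sigma^*$), the Lipschitz part of $(A_2')$, and the refined bound $|b^{(1)}(t,\xi)-b^{(1)}(t,\xi^0)|\le K(1+\|\xi\|_\tau^{\alpha})$ to isolate the truly path-dependent part of the drift, I expect a differential inequality of the form
$$d|X(t)|^2\le C\bigl(1+|X(t)|^2+\|X_t\|_\tau^{2\alpha}\bigr)dt+2|X(t)|f_0(t,X(t))dt+dM(t),$$
with $M$ a BDG-controllable martingale. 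Integrating up to a localizing time, taking a supremum, splitting the singular cross term $\int|X|f_0\,ds$ by Cauchy--Schwarz, and invoking the Krylov--Khasminskii exponential estimate for $f_0\in\tilde L^{p_0}_{q_0}$ (available because $(p_0,q_0)\in\mathcal K$) produce, after Gr\"onwall and absorption, the polynomial one-step bound $\E[\sup_{r\le \delta}|X(r)|^2]\le C_\delta(1+\|X_0\|_\tau^{2})$; combined with the memory-decay identity, this yields
$$\E\Bigl[\int_0^\delta\|X_u\|_\tau^{2\alpha}du\Bigr]\le C\delta\bigl(1+\|X_0\|_\tau^{2\alpha}\bigr).$$

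To upgrade this to the exponential statement \eqref{EXPB1}, I would apply Khasminskii's lemma on a sufficiently short interval $[0,\delta]$: for $\beta\in(0,\beta_T]$ with $\beta_T$ chosen so that the one-step bound controls the Laplace transform, one obtains
$$\E\Bigl[\exp\Bigl(\beta\int_0^\delta\|X_u\|_\tau^{2\alpha}du\Bigr)\Bigr]\le e^{c\beta(1+\|X_0\|_\tau^{2\alpha})},$$
and iterating over the $\lfloor T/\delta\rfloor+1$ subintervals covering $[0,T]$ via the Markov property delivers \eqref{EXPB1}. The principal obstacle is the feedback loop between $\|X_t\|_\tau^{2\alpha}$ and $|X(t)|^2$ in the It\^o estimate: since $\|X_t\|_\tau$ already contains $\sup_{r\le t}|X(r)|$, a naive Gr\"onwall argument would be circular. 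The memory-decay identity is precisely what breaks this loop, by converting the forward piece of the segment norm into a running supremum of the solution that can be absorbed. A secondary delicacy is the extraction of exponential moments involving the singular drift $f_0$, handled by the Krylov--Khasminskii exponential estimate in the spirit of \cite{XXZZ,Z24}.
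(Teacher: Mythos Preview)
Your argument has two gaps that prevent it from closing. First, the Krylov--Khasminskii estimate you invoke for $\int f_0(s,X(s))^2\,\d s$ is not available at this point of the paper: Lemma~\ref{KEs} and Corollary~\ref{cor1} are proved \emph{using} Lemma~\ref{EXPB}, because the Girsanov density needed to strip off the non-Markovian drift $\hat b(t,\xi)=b^{(1)}(t,\xi)-b^{(1)}(t,\xi^0)$ is controlled in \eqref{PK3} precisely through \eqref{EXPB1}. Sharp $\tilde L^p_q$-Krylov estimates with $(p,q)\in\scr K$ require structure on the drift; for the path-dependent process $X$ that structure is only accessible after a measure change whose Novikov condition is $\E\exp\bigl(c\int_0^T\|X_u\|_\tau^{2\alpha}\d u\bigr)<\infty$. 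So applying It\^o directly to $|X(t)|^2$ and appealing to Krylov is circular. The paper instead uses Zvonkin's transform: the PDE~\eqref{PDE1} produces $\Theta=\mathrm{id}+u$ so that $Y(t):=\Theta(t,X(t))$ obeys an SDE whose drift $\lambda u+\nabla\Theta\cdot b^{(1)}$ has \emph{no} singular term, and the resulting It\^o inequality \eqref{dy2}--\eqref{dy3} for $\e^{2\alpha\tau t}\{|Y|^2+1\}^\alpha$ is completely regular.

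Second, even granting your one-step polynomial bound, Khasminskii's lemma does not deliver \eqref{EXPB1}: Khasminskii requires the conditional increment $\E\bigl[\int_{j\delta}^{(j+1)\delta}\beta\|X_u\|_\tau^{2\alpha}\,\d u\,\big|\,\F_{j\delta}\bigr]$ to be bounded by some $\rho<1$ uniformly in the state, whereas yours is $C\beta\delta(1+\|X_{j\delta}\|_\tau^{2\alpha})$, state-dependent and unbounded. Iterating generates factors $\exp(c\beta\|X_{j\delta}\|_\tau^{2\alpha})$ that need exactly the exponential moments you are trying to establish. The paper closes this loop differently: from the differential inequality $\d l(t)\le c\,\bar l_t\,\d t+\d\tilde M(t)$ it invokes the exponential martingale inequality \cite[Lemma~3.1]{BWY}, which bounds $\E\exp\!\bigl(\varepsilon\!\int_0^t\bar l_s\,\d s\bigr)$ by $\e^{\varepsilon l(0)+1}\bigl(\E\exp(2\varepsilon^2\langle\tilde M\rangle(t))\bigr)^{1/2}$; since $\d\langle\tilde M\rangle\le c\,l(t)\,\d t$ (not $\bar l_t$), choosing $\varepsilon$ small lets the right side be absorbed into the left. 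Your memory-decay identity is indeed the paper's \eqref{EPR} and is used, but only after Zvonkin has removed the singularity---it does not by itself break either of the feedback loops above.
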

\begin{proof}
Obviously, \eqref{EXPB1} holds for $\alpha=0$. By shifting the starting time from $0$ to $s$,  we may and do assume that $s=0$. So, by Jensen's inequality,  it suffices to find constants $\bb_T,k_T>0$ such that
\begin{align}\label{exp0}
\E\left[\exp\left(\beta_T\int_{0}^{T}\| X_u\|_\tau^{2\alpha}\,\mathrm{d} u\right)\bigg|\F_{0}\right]\leq \e^{k_T{\beta_T}\left(1+\|X_0\|_\tau^{2\alpha}\right)}.
\end{align}  By \cite[Theorem 3.2]{XXZZ} and \cite[Theorem 2.1]{ZY}, there exists a constant $\lambda_0>0$ such that for any $\lambda\geq\lambda_0$, the PDE for $u(t,\cdot):\R^d\to\R^d$
\begin{align}\label{PDE1}
(\partial_t+L^0(t))u(t,\cdot)=\lambda u(t,\cdot)-b^{(0)}(t,\cdot),\ \ t\in[0,T],\ \ u_T=0
\end{align}
has a unique solution in $\tilde{H}_{q_0}^{2,p_0}(T)$, where $L^0(t):=\nn_{b^{(0)}(t,\cdot)}+\ff{1}{2}\text{tr}(\{\si\si^*\}(t,\cdot)\nn^2)$, and there exist constants $c,\theta>0$ such that
\begin{align}\label{LAMU}
\lambda^\theta(\| u\|_\8+\|\nn u\|_\8)+\|\partial_tu\|_{\tilde{L}_{q_0}^{p_0}(T)}+\|\nn^2u\|_{\tilde{L}_{q_0}^{p_0}(T)}\leq c,\ \ \lambda\geq\lambda_0.
\end{align}
We may take $\lambda\geq\lambda_0$ such that
\begin{align}\label{UVV}
\| u\|_\8+\|\nn u\|_\8\leq\ff 1 2,
\end{align}so that $\Theta(t,x):=x+u(t,x)$ satisfies
\begin{align}\label{Theta}
\ff{1}{2}|x-y|^2\leq|\Theta(t,x)-\Theta(t,y)|^2\leq2|x-y|^2,\ \ t\in[0,T],\ x,y\in\R^d.
\end{align}
For any $t\in[0,T]$, let $Y(t)=\Theta(t,X(t))$. By It\^o's formula in \cite[Theorem 1.2.3(3)]{RW24} and \eqref{PDE1}, we obtain
\begin{align*}
\d Y(t)=\big\{\lambda u(t,X(t))+\nn\Theta(t,X(t))b^{(1)}(t,X_t)\big\}\d t+\big(\{\nn\Theta\}\si\big)(t,X(t))\d W(t).
\end{align*}
Combining this with $(A_1)$, $(A_2)$, the second inequality in $(A_3')$, \eqref{UVV} and  It\^o's formula, we find  a constant $c_0>0$ such that
\begin{equation}\label{dy2}
\begin{aligned}
\d \{|Y(t)|^2+1\}^{\alpha}\leq& c_0 \{|Y(t)|^2+1\}^{\alpha-\ff 1 2}\{1+\|Y _t\|^{\alpha}_\tau\}\d t+\d M(t),
\ \ t\in[0,T],
\end{aligned}
\end{equation}
where $$\d M(t)=2\alpha\{|Y(t)|^2+1\}^{(\alpha-1)}\<Y(t),\big(\{\nn\Theta\}\si\big)(t,X(t))\d W(t)\>.$$
Note that
\beq\label{EPR} \begin{split}
&\e^{p\tau t}\|X_t\|_\tau^p:= \sup_{s\in(-\8,0]}\left( \e^{p\tau (t+s)}|X(t+s)|^p\right) \\
& \leq \sup_{s\in(-\8,0]}\left( \e^{p\tau s}|X(s)|^p\right) +\sup_{s\in[0,t]}\left( \e^{p\tau s}|X(s)|^p\right) \\
&= \|X_0\|_\tau^p+\sup_{s\in[0,t]}\left( \e^{p\tau s}|X(s)|^p\right),\ \ t\geq0.
\end{split}\end{equation}
Combining this with \eqref{dy2}, Young's inequality, It\^o's formula, and    $(s+1)^{r}\leq 1$ for   $s\geq 0$ and $r\leq0$, we find a constant $c_1>0$ such that
\begin{equation}\label{dy3}
\begin{split}
\d \big\{\e^{2\alpha \tau t}\{|Y(t)|^2+1\}^{\alpha}\big\}&\leq  \e^{2\alpha \tau t}\d M(t)+c_1\left(1+\|Y_0\|_\tau^{2\alpha}\right)\d t\\
&+ \sup_{s\in[0,t]}  \e^{2\alpha \tau s}\left(|Y(s)|^2+1\right)^{\alpha}\d t,\ \ t\in [0,T].
\end{split}
\end{equation}
So, letting
\beg{align*} &l(t)=\e^{2\alpha \tau t}\{|Y(t)|^2+1\}^{\alpha}+c_1\left(1+\|Y_0\|_\tau^{2\alpha}\right),\\ &\bar{l}_t=\sup_{s\in[0,t]}l(s)\ \ t\in [0,T],\end{align*} we   find a   constant $c>0$ and a martingale $\tt M(t) $ such that
\begin{align}
\d l(t)\leq c\,\bar{l}_t\d t+\d \tt M(t).
\end{align}
Then by \cite[Lemma 3.1]{BWY}, we find a constant $c_2>0$ such that
\begin{align*}
&\E\left[\exp\bigg(\ff{\vv}{T\e^{1+cT}}\int_{0}^{t}\bar{l}_s
\,\d s\bigg)\bigg|\F_0\right] \leq  \e^{\vv l(s)+1}\left(\E\left[\exp\left(2\vv^2\<\tt M\>(t)\right)\big|\F_0\right]\right)^{\ff{1}{2}}\\
&\leq  \e^{\vv l(0)+1}\left(\E\left[\exp\left(c_2\vv^2\int_{0}^{t}l(s)\,\d s\right)\bigg|\F_0\right]\right)^{\ff{1}{2}},\ \ \vv>0,\ t\in [0,T].
\end{align*}
Taking $ \vv= \ff{1}{c_2T\e^{1+cT}},$ we derive
\begin{align*}
\E\left[\exp\bigg(\ff{\vv}{T\e^{1+cT}}\int_{0}^{t}\bar{l}_s
\,\d s\bigg)\bigg|\F_0\right]\leq \e^{2(\vv l(0)+1)},\ \ t\in[0,T].
\end{align*}
By combining   this with \eqref{UVV} and \eqref{EPR}, we derive \eqref{exp0} for  $\beta_T=\ff{\vv}{T\e^{1+cT}} $ and some constant $k_T>0$.
\end{proof}

In the next result,  we present   Krylov-Khasminskii   estimates for SDEs with memory.

\begin{lem}\label{KEs} Assume $(A_1)$, $(A_2)$ and $(A_3')$. For any constants { $T,p,q>1$ such that $(2p,2q)\in\scr K$   and $\vv>0$, there exist  constants  $c=c(T,p,q,\vv)>0$ and $\theta=\theta(T,p,q)>0$ } such that the solution to \eqref{E-2}  satisfies
\begin{equation}\label{KE1}
\begin{aligned}
\E\left[\int_{s}^{t}|f_u(X(u))|\,\d u\bigg|\F_{s}\right]\leq \left(c  +\vv\|X_s\|_\tau^\alpha\right)\|f\|_{\tt L_q^p(s,t)}^2,
\end{aligned}
\end{equation}
\begin{equation}
\begin{aligned}\label{KE2}			\E\left[\exp\left(\int_{s}^{t}|f_u(X(u))|\,\d u\right)\bigg|\F_{s}\right]\leq \exp\left[\vv{\|X_{s}\|}_\tau^{2\alpha}+c \left( 1+\|f\|^{\theta}_{\tilde{L}^p_q(s,t)}\right) \right]
\end{aligned}
\end{equation}	  for any $0\leq s\leq t\leq T$ and $f\in\tilde{L}^p_q(T)$.
\end{lem}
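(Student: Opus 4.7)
The plan is to prove $(1)$ first by a Zvonkin-type argument combined with It\^o's formula, and then derive $(2)$ from $(1)$ by a Khasminskii-lemma iteration, assisted by the exponential moment bound of Lemma \ref{EXPB}.

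For $(1)$, I would resolve $|f|$ through an auxiliary parabolic PDE on $[s,t]$ of the form
$$(\pp_r+L^0(r))u-\ll u=-G(|f_r|),\quad u(t,\cdot)=0,$$
where $L^0(r)=\<b^{(0)}(r,\cdot),\nn\>+\tfrac12{\rm tr}(a(r,\cdot)\nn^2)$ is the singular part of the SDE generator as in the proof of Lemma \ref{EXPB}, and $G$ and $\ll>0$ are chosen so that \cite[Theorem 3.2]{XXZZ} yields, under $(2p,2q)\in\scr K$,
$$\ll^\theta\bigl(\|u\|_\8+\|\nn u\|_\8\bigr)\le C\|f\|_{\tt L_q^p(s,t)}^2$$
for some $\theta>0$. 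Applying It\^o's formula to $u(r,X(r))$ along $\eqref{E-2}$ and substituting the PDE gives
$$\int_s^tG(|f_r|)(X(r))\,\d r=u(s,X(s))-u(t,X(t))+\int_s^t\bigl[\ll u+b^{(1)}(r,X_r)\cdot\nn u\bigr](r,X(r))\,\d r+M_t-M_s,$$
with $M$ a local martingale. Conditioning on $\F_s$, the boundary and $\ll u$ terms contribute $\le C(1+\ll T)\|u\|_\8\le C'\|f\|^2$, and an elementary inequality of the type $|f|\le(t-s)^{-1}+(t-s)|f|^2$ reduces control of $\E[\int|f|(X(r))\d r|\F_s]$ to that of $\E[\int G(|f|)(X(r))\d r|\F_s]$, absorbing the extra constant into $c$.

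The key step is the cross term $\int_s^tb^{(1)}(r,X_r)\cdot\nn u(r,X(r))\,\d r$. From $(A_2')$ together with $\sup_{s\le 0}\e^{\tau s}=1$, one derives the pointwise bound $|b^{(1)}(r,X_r)|\le C(1+|X(r)|+\|X_r\|_\tau^\aa)$. The constant and linear-in-$|X(r)|$ pieces are absorbed using Theorem \ref{TB1} (started at time $s$ instead of $0$), producing a constant times $\|f\|^2$ contribution. The purely memory-dependent $\|X_r\|_\tau^\aa$ piece, combined with $\|\nn u\|_\8\le C\ll^{-\theta}\|f\|^2$, yields a term of the form $\vv\|X_s\|_\tau^\aa\|f\|^2$ once $\ll$ is chosen so that $C\ll^{-\theta}\le\vv$, closing $(1)$.

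For $(2)$, a Khasminskii-lemma iteration upgrades $(1)$ to the exponential bound: partition $[s,t]$ into short sub-intervals $[s_j,s_{j+1}]$ on which the linear bound $(1)$ forces $\E[\int_{s_j}^{s_{j+1}}|f|(X(r))\,\d r|\F_{s_j}]$ to be uniformly small (after absorbing $\|X_{s_j}\|_\tau^\aa$-factors by Lemma \ref{EXPB}), then iterate into higher moments of $\int_s^t|f|(X(r))\d r$ and sum. The linear $\|X_s\|_\tau^\aa$-factor of $(1)$ compounds, via Young's inequality, into $\|X_s\|_\tau^{2\aa}$ inside the exponent, whose exponential moments are precisely what Lemma \ref{EXPB} supplies, giving $(2)$. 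The main obstacle throughout is the cross term with the path-dependent drift $b^{(1)}(r,X_r)$; the two-part structure of $(A_2')$, separating a globally Lipschitz remainder from a memory-dependent $\|\xi\|_\tau^\aa$-growth piece, is tailored precisely to this step, with the Lipschitz part handled by Theorem \ref{TB1} and the $\aa$-part meshing cleanly with Lemma \ref{EXPB} in the final exponential upgrade.
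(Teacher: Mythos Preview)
There is a genuine gap in your treatment of the linear-in-$|X(r)|$ piece of $b^{(1)}$. Theorem \ref{TB1} shifted to start at time $s$ yields $\E\big[\int_s^t|X(r)|\,\d r\,\big|\,\F_s\big]\le c(T)(1+\|X_s\|_\tau)$, which is not ``a constant times $\|f\|^2$'' but carries $\|X_s\|_\tau$ to the \emph{first} power. After multiplying by $\|\nn u\|_\8\le C\ll^{-\theta}\|f\|^2$ and taking $\ll$ large, the contribution becomes $\vv'\|X_s\|_\tau\,\|f\|^2$ for some small but \emph{fixed} $\vv'>0$ (since $\ll$ is a deterministic PDE parameter and cannot depend on $\|X_s\|_\tau$). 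For $\aa<1$ no choice of constants makes $\vv'\|X_s\|_\tau\le c+\vv\|X_s\|_\tau^\aa$ hold uniformly in $\|X_s\|_\tau$, so \eqref{KE1} is not obtained. The deficit then breaks your route to \eqref{KE2}: the Khasminskii iteration would carry random factors of order $\|X_{s_j}\|_\tau$ at the partition points, and after squaring via Young's inequality you would need exponential integrability of $\int_s^t\|X_u\|_\tau^{2}\d u$, whereas Lemma \ref{EXPB} only supplies that for $\int_s^t\|X_u\|_\tau^{2\aa}\d u$.

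The paper's remedy is to move the Markovian part of $b^{(1)}$ into the reference drift rather than treat all of $b^{(1)}$ as perturbation. Set $\bar b(t,x):=b^{(0)}(t,x)+b^{(1)}(t,\phi^x)$ with $\phi^x(r)\equiv x$ (Lipschitz in $x$ by the first line of $(A_2')$) and $\hat b(t,\xi):=b^{(1)}(t,\xi)-b^{(1)}(t,\xi^0)$; the second line of $(A_2')$ then gives $|\hat b|\le K(1+\|\xi\|_\tau^\aa)$ directly, with no linear-in-$\xi(0)$ residue. For \eqref{KE1} the paper quotes a Krylov estimate for the Markovian drift $\bar b$ perturbed by $\hat b$; in your PDE language this corresponds to building the auxiliary equation with operator $L^0(r)+\<b^{(1)}(r,\phi^\cdot),\nn\>$ rather than $L^0(r)$, so that only $\hat b$ survives as cross term and the $\aa$-power emerges cleanly. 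For \eqref{KE2} the paper does not run a Khasminskii iteration on the path-dependent SDE at all: Girsanov is used to remove $\hat b$, reducing to the Markovian SDE with drift $\bar b$ on which the standard Khasminskii bound holds with a deterministic right-hand side; the Radon--Nikodym cost $\E[R_n^{-(\tt p-1)}|\F_s]$ is controlled through Lemma \ref{EXPB} via the $\aa$-growth of $\hat b$, which is exactly how the power $\|X_s\|_\tau^{2\aa}$ enters the exponent.
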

\begin{proof} Without loss of generality, we may simply prove these estimates for $s=0$.

For any $x\in\R^d$, let $\phi^x\in\C_\tau$ be defined as
$$\phi^x(r):=x,\ \ r\le 0.$$ For any $t\in [0,T],\ x\in\R^d $ and $\xi\in\C_\tau$, let
$$\bar b(t,x):= b^{(0)}(t,x)+ b^{(1)}(t, \phi^x),\ \ \ \hat b(t,\xi):= b^{(1)}(t,\xi)-b^{(1)}(t,\xi^0).$$
By \eqref{UVV}, \eqref{EPR},   \eqref{dy3} and  the Burkholder-Davis-Gundy (BDG) inequality, we find a constant $k_1>0$  such that
\begin{align}\label{EX2A}
\E\left[\sup_{t\in[0,T]}\|X_t\|_\tau^{2\alpha}\bigg|\F_0\right]\leq k_1\left(1+\|X_0\|_\tau^{2\alpha}\right).
\end{align}
For any $n\ge 1$, let
\beq\label{TNN} \tau_n:= \inf\{t\in[0,T]:\|X_t\|_\tau\geq n\},\ \ \inf\emptyset:=T.\end{equation}
By \eqref{EX2A}, $(A_1)$, $(A_2)$, the second inequality in $(A_3')$, and applying Krylov's estimate in \cite[Theorem 3.1]{ZY} for $\bar b(t,x) $ in place of $b(t,x)$, see also \cite[Theorem 1.2.3(2)]{RW24},
we find  constants $k_2$ depending on $\vv$ and $k_3>0$ independent of $\vv$ such that
\begin{equation}
\begin{aligned}
&\E\left[\int_{0}^{t\wedge\tau_n}|f_s|\left(X(s)\right) \,\d s\bigg|\F_0\right]\\
\leq& \left\{k_2+\vv\left[\E\left(\int_{0}^{t\wedge\tau_n}|\hat b (s,X_s)|^2\,\d s \bigg|\F_0\right)\right]^{\ff 1 2}\right\}\|f\|_{\tt L_q^p(0,t)}\\
\leq& k_3\left(k_2+\vv\|X_0\|_\tau^\alpha\right)\|f\|_{\tt L_q^p(0,t)}, \ \  t\in [0, T].
\end{aligned}
\end{equation} Letting $n\to\8$, we derive \eqref{KE1} since $\vv>0$ is arbitrary.

To  prove \eqref{KE2}, we shall use Girsanov's transform. Let
\begin{align}\label{Gam}
\gamma_t=\{\si^*(\si\si^*)^{-1}\}(t,X(t)) \hat b(t, X_t),\ \ t\in[0,T].
\end{align}
Then   for any $n\ge 1$ and $\tau_n$ in \eqref{TNN},
$$
R_n(t):=\exp\bigg[-\int_{0}^{t\land \tau_n}\<\gamma_r,\d W(r)\>-\ff{1}{2}\int_{0}^{t\land\tau_n}|\gamma_r|^2\,\d r\bigg],\ \ t\in [0,T]
$$ is a   martingale. By
Girsanov's theorem,
$${W}_n(t):=W(t)+\int_{0}^{t\wedge\tau_n}\gamma_s\,\d s,\ \ t\in [0,T]$$
is a Brownian motion under the probability measure
$\d \Q_n:=R_{n}(T) \d\P$,
and $X$ solves  the equation
\begin{align}\label{XN}X(t)=X(0)+\int_{0}^{t} \bar b(s,X(s)) \,\d s +\int_{0}^{t}\si(s,X(s))\,\d   W_n(s), \ \ t\in[0,\tau_n].\end{align}
Since the Krylov estimate in \cite[Theorem 1.2.4]{RW24} holds for \eqref{XN}, letting $f^n_r(x)=f_r(x)\1_{\{|x|<n\}}$, by the argument in the proof of \cite[Lemma 4.4]{Z24}, we find a constant
$k>0$ independent of $s$ such that for any $t\in[0,T]$, 
\begin{equation}\label{*}
\E_{\Q_n}\left[\exp\left(\lambda\int_{0}^{t} \left|f^n_{r}\left(X(r\wedge\tau_n)\right)\right| \,\d r\right)\bigg|\F_0\right]
\leq \exp\Big[k+ k \|f\|_{\tt L_q^p(0,t)}^{k}\Big].
\end{equation}
On the other hand, for constants $K$ in $(A_1)$ $(A_2)$,  $(A_3')$, and $\beta_T$   in Lemma \ref{EXPB}, let
$$ p_0:=\ff{3+\sqrt{1+8\beta_TK^{-3}}}{4}.$$ We have
$$  k(p):=(p-1)(2p-1)K^3\in (0, \beta_T],  \ \ \ p\in (1,p_0].$$
Thus,   taking $\tt p\in[1,p_0]$ with $\tt p-1$ small enough such that ${k_T} k(\tt p) \leq 2\vv$, by  $(A_1)$ $(A_2)$,  $(A_3')$ and \eqref{EXPB1},  we  find a constant $\bb=\bb(p,q,\vv)> 0$ such that
\begin{equation}\label{PK3}
\begin{aligned}
&\E\big[|R_n(T)|^{-(\tt p-1)}\big|\F_{0}\big]\\=&\E\left[\exp\bigg((\tt p-1)\int_{0}^{T\wedge\tau_n}\<\gamma_s,\d W(s)\>+\ff{\tt p-1}{2}\int_{0}^{T\wedge\tau_n}|\gamma_s|^2\,\d s\bigg)\bigg|\F_{0}\right]\\
\leq&\left(\E\left[\exp\bigg((\tt p-1)(2\tt p-1)\int_{0}^{T\wedge\tau_n}|\gamma_s|^2\,\d s\bigg)\bigg|\F_0\right]\right)^{\ff 1 2}\\
&\quad\times \left(\E\left[\exp\bigg(2(\tt p-1)\int_{0}^{T\wedge\tau_n}\<\gamma_s,\d W(s)\> - 2(\tt p-1)^2
\int_{0}^{T\wedge\tau_n}|\gamma_s|^2\,\d s\bigg)\bigg|\F_0\right]\right)^{\ff 1 2}\\
\leq&\exp\left[\bb +\vv\|X_0\|^{2\alpha}_\tau\right]\times 1= \exp\left[\bb+\vv\|X_0\|^{2\alpha}_\tau\right].
\end{aligned}
\end{equation}	
Combining this with \eqref{*} and applying  H\"older's inequality,    we find constants $c=c(p,q,\vv)>0$
and $\theta=\theta(p,q)>0$ such that
\begin{align*}
&\E\left[\exp\left(\int_{0}^{t}\left|f^n_{s}(X(s\wedge\tau_n))\right| \,\d s\right)\bigg|\F_{0}\right]\\	=&\E_{\Q_n}\left[R_n(T)^{-1}\exp\left(\int_{0}^{t}|f^n_{s}(X(s\wedge\tau_n))|\, \d s\right)\bigg|\F_{0}\right]\\	
\leq&\left(\E\big[R_n(T)^{-(\tt p-1)}\big|\F_{0}\big]\right)^{\ff{1}{\tt p}}\left(\E_{\Q_n}\left[\exp\left(\ff{\tt p}{\tt p -1}\int_{0}^{t}|f^n_{r}  (X(s\wedge\tau_n))|\, \d s\right)\bigg|\F_{0}\right] \right)^{\ff{\tt p -1}{\tt p}} \\
\leq& \exp\left[\vv\|X_0\|_\tau^{2\alpha}+c \left(1+\|f\|^{\theta}_{\tilde{L}_{p}^{q}(0,t)}\right)\right].
\end{align*}
This with  $n\to\8$ implies \eqref{KE2}.
\end{proof}

By Lemma \ref{EXPB} and \eqref{KE2}, we have the following corollary which is a key  in the  proofs of Theorem \ref{TB1} and \ref{TN}.

\begin{cor}\label{cor1} Assume $(A_1)$, $(A_2)$ and $(A_3')$.
\beg{enumerate}
\item[$(1)$] For any $T\in (1,\infty)$, $(p,q)\in\scr K$ and  $\vv\in(0,1)$, there exist constants $k=k(T,p,q,\vv)>0$ and $\theta=\theta(T,p,q)>0$  such that    any solution $X_s$ to $\eqref{E-2}$ satisfies
\begin{equation}
\begin{aligned}\label{KE3}	&	\E\left[\exp\left(\int_{s}^{t}|f_u(X(u))|  \|X_u\|_\tau^{\alpha}\,\d u\right)\bigg|\F_{s}\right]
\leq \exp\left[ \vv{\|X_{s}\|}_\tau^{2\alpha}+k\left( 1+\|f\|^{\theta}_{\tilde{L}^p_q(s,t)}\right) \right],\\
&\qquad \ 0\leq s\leq t\leq T,\ f\in\tilde{L}^p_q(T).
\end{aligned}
\end{equation}
\item[$(2)$] 	For any $T\in (1,\infty)$, there exist  constants $\beta_T, k_T>0$
such that any solution to $\eqref{E-2}$ satisfies
\begin{equation}
\begin{aligned}\label{EXP2}
\sup_{t\in [0,T]}\E\Big[
\e^{\beta\|X_t\|_\tau^{2\alpha}}\Big|\F_0\Big]\leq \e^{ k_T\bb  (1+\|X_0\|_\tau^{2\alpha})},\ \ \bb\in [0,\bb_T].
\end{aligned}
\end{equation}\end{enumerate}
\end{cor}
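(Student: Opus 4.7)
The plan is to treat the two assertions separately, each by reducing to the estimates already in hand.

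For (1), the strategy is to decouple the integrand $|f_u(X(u))|\|X_u\|_\tau^\alpha$ via Young's inequality. For a small parameter $\mu>0$ to be chosen,
\[
\int_s^t|f_u(X(u))|\|X_u\|_\tau^\alpha\,\d u\le \ff{1}{4\mu}\int_s^t|f_u(X(u))|^2\,\d u+\mu\int_s^t\|X_u\|_\tau^{2\alpha}\,\d u.
\]
Exponentiating and applying Cauchy--Schwarz to the conditional expectation splits the left-hand side of \eqref{KE3} into
\[
\E\!\left[\exp\!\Bigl(\ff{1}{2\mu}\!\int_s^t|f_u|^2(X(u))\,\d u\Bigr)\Big|\F_s\right]^{\!1/2}\E\!\left[\exp\!\Bigl(2\mu\!\int_s^t\|X_u\|_\tau^{2\alpha}\,\d u\Bigr)\Big|\F_s\right]^{\!1/2}.
\]
The second factor is controlled directly by \eqref{EXPB1}, provided $2\mu\le\bb_T$. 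The first is controlled by \eqref{KE2} applied to the function $(2\mu)^{-1}|f|^2$: the admissibility condition $(2r,2s)\in\scr K$ of Lemma \ref{KEs} is met for $(r,s):=(p/2,q/2)$ because $(p,q)\in\scr K$, and the norm identity $\bigl\|(2\mu)^{-1}|f|^2\bigr\|_{\tilde L^{p/2}_{q/2}(s,t)}=(2\mu)^{-1}\|f\|_{\tilde L^p_q(s,t)}^2$ turns the output power into $\|f\|_{\tilde L^p_q(s,t)}^{2\theta_1}$, so one may set $\theta:=2\theta_1=\theta(T,p,q)$. Choosing first $\vv_1=\vv/2$ in \eqref{KE2} and then $\mu=\min(\bb_T/2,\vv/(4k_T))$ makes the aggregate coefficient of $\|X_s\|_\tau^{2\alpha}$ at most $\vv$, and the remaining constants combine into a single $k=k(T,p,q,\vv)$ multiplying $1+\|f\|_{\tilde L^p_q(s,t)}^{\theta}$, giving \eqref{KE3}.

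For (2), the plan is to sharpen the proof of Lemma \ref{EXPB} from an integral exponential moment to a supremum-in-time one. Revisiting that proof, the sample-path inequality $\d l(t)\le c\bar l_t\,\d t+\d\tt M(t)$ combined with pathwise Gr\"onwall yields $\bar l_T\le e^{cT}(l(0)+\tt M^*(T))$, where $\tt M^*(T):=\sup_{t\le T}\tt M(t)$. An exponential moment of $\tt M^*(T)$ is obtained by a short bootstrap: Doob--Markov gives $\E[e^{\mu\tt M^*(T)}|\F_0]\le 1+C_\mu\E[e^{2\mu\tt M(T)}|\F_0]$, and Cauchy--Schwarz against the stochastic exponential of $2\mu\tt M$ reduces this to $\E[e^{C\mu^2\langle\tt M\rangle(T)}|\F_0]^{1/2}$; using the explicit form of the martingale integrand together with the elementary bound $(|Y|^2+1)^{2\alpha-1}\le 1+(|Y|^2+1)^\alpha$ for $\alpha\in[0,1]$ gives $\langle\tt M\rangle(T)\le C_T(1+\bar l_T)$, closing a fixed-point estimate for $\beta\mapsto\E[e^{\beta\bar l_T}|\F_0]$ in a small-$\beta$ regime. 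Combining with the pointwise control $\sup_{t\in[0,T]}\|X_t\|_\tau^{2\alpha}\le\|X_0\|_\tau^{2\alpha}+C\bar l_T$, derived from \eqref{Theta}, \eqref{UVV} and \eqref{EPR}, and passing from a fixed $\vv_0$ to all $\beta\in[0,\vv_0/C]$ via the Jensen step $\E[e^{\beta\bar l_T}|\F_0]\le (\E[e^{\vv_0\bar l_T}|\F_0])^{\beta/\vv_0}$, one recovers the linear-in-$\beta$ exponent required by \eqref{EXP2}, with $\bb_T:=\vv_0/C$.

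I expect the main obstacle to be the self-consistency of the martingale bound in (2): since $\langle\tt M\rangle(T)$ is itself controlled by $\bar l_T$, the exponential estimate for $\bar l_T$ must close on itself, which in turn forces $\beta$ to lie in a small interval and fixes $\bb_T$. Part (1), by comparison, reduces essentially mechanically to the two estimates already in hand, the only delicate point being the verification that $(p/2,q/2)$ is an admissible exponent pair for Lemma \ref{KEs}.
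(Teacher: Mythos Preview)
Your argument for (1) coincides with the paper's: Young's inequality to split the product, Cauchy--Schwarz on the exponential, then \eqref{EXPB1} for the $\|X_u\|_\tau^{2\alpha}$ factor and \eqref{KE2} with exponents $(p/2,q/2)$ for the $|f|^2$ factor, followed by the same parameter tuning.

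For (2) the paper proceeds quite differently. Instead of sharpening the differential inequality $\d l(t)\le c\,\bar l_t\,\d t+\d\tt M(t)$ directly, it reuses the Girsanov construction from the proof of Lemma~\ref{KEs}: under $\Q_n=R_n(T)\P$ the process solves the \emph{memory-free} SDE \eqref{XN} with drift $\bar b(t,x)$, and for that equation the pointwise bound $\E_{\Q_n}\big[\e^{c_0\|X_{t\wedge\tau_n}\|_\tau^{2\alpha}}\big|\F_0\big]\le\e^{c_1(1+\|X_0\|_\tau^{2\alpha})}$ is imported from \cite{BS,B18}. The estimate under $\P$ is then recovered via H\"older against $R_n(T)^{-(\tilde p-1)}$, whose moment was already controlled in \eqref{PK3}. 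Your route is more self-contained---no external citations, no change of measure---and in fact delivers the stronger statement with $\sup_{t\le T}$ inside the expectation; the paper's route is shorter because the Girsanov machinery and \eqref{PK3} are already available from Lemma~\ref{KEs}. Your bootstrap does close: after localizing at $\tau_n$ so that $\bar l_{T\wedge\tau_n}$ is bounded, Doob's maximal inequality for the submartingale $\e^{\mu\tt M}$ and the exponential-martingale Cauchy--Schwarz give $\E[\e^{\beta\bar l_{T\wedge\tau_n}}|\F_0]\le C\e^{c\mu l(0)}\big(\E[\e^{c\mu^2\bar l_{T\wedge\tau_n}}|\F_0]\big)^{1/2}$, which for $\beta$ small enough is of the form $A\le B\,A^{1/2}$ with $A<\infty$, hence solvable; Fatou then removes $\tau_n$, and your final Jensen step correctly linearizes the exponent in $\beta$.
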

\begin{proof}
(a) 	  {Let $\bb_T, k_T>0$ be in Lemma \ref{EXPB}, and let $\vv, c>0$ be in Lemma \ref{KEs}}. We choose  $\beta \in (0,\bb_T]$   such that $k_T \beta \leq 2\vv$, so that by Lemma \ref{EXPB}, \eqref{KE2} and Young's and H\"older's  inequalities, we find constants $\theta=\theta(T,p,q), k=k(T,p,q,\vv)>0$ such that
\begin{equation*}
\begin{aligned}	&\E\left[\exp\left(\int_{s}^{t}|f_u(X(u))|\cdot  \|X_u\|_\tau^{\alpha}\,\d u\right)\bigg|\F_{s}\right]\\\leq&\E\left[\exp\left(\int_{s}^{t}\left({\ff 1 {2\bb}}|f_u(X(u))|^2 +\ff{\beta}{2}\cdot\|X_u\|_\tau^{2\alpha}\right)\,\d u\right)\bigg|\F_{s}\right]\\
\leq&\left( \E\left[\exp\left({\ff 1 \bb}\int_{s}^{t}|f_u(X(u))|^2\,\d u\right)\bigg|\F_{s}\right]\right) ^{\ff 1 2}\left( \E\left[\exp\left(\beta\int_{s}^{t}\|X_u\|_\tau^{2\alpha}\,\d u\right)\bigg|\F_{s}\right]\right) ^{\ff 1 2}\\
\leq& \exp\left[ \vv{\|X_{s}\|}_\tau^{2\alpha}+k\left( 1+{
\|f^2\|^{\theta/2}_{\tilde{L}^{p/2}_{q/2}(s,t)}}\right) \right]= \exp\left[ \vv{\|X_{s}\|}_\tau^{2\alpha}+k\left( 1+  \|f\|^\theta_{\tilde{L}^p_q(s,t)} \right) \right].
\end{aligned}
\end{equation*}	

(b) In the case where $b^{(1)}=0$, \eqref{EXP2} can be obtained by repeating the proof of \cite[Lemma 2.4]{BS} and \cite[Corollary 2.5]{B18}, so that by  Girsanov's theorem,  for { the segment solution to \eqref{XN} and $\Q_n$} in the proof of Lemma \ref{KEs},
there exists constants $c_0,c_1>0$ such that
\beq\label{o8}
\E_{\Q_n}
\left[\e^{c_0\|X_{t\land \tau_n}\|_\tau^{2\alpha}}\big|\F_0\right]\leq \e^{c_1+ c_1\|X_0\|_\tau^{2\alpha}},\ \ \ t\in[0,T],\ n\ge 1.
\end{equation}
Letting $\tt p>1$ be in  \eqref{PK3} and taking
$$\bb_T:=\ff {\tt p c_0}{\tt p-1},$$
by combining   \eqref{PK3} with  \eqref{o8} and using H\"older's inequality, we find a constant $c>0$ such that
\begin{equation}
\begin{aligned}& \E\big[\e^{\beta_T \|X_{t\land \tau_n}\|_\tau^{2\alpha}}\big|\F_0\big]\leq \E_{\Q_n}\left[R_n(T)^{-1}\e^{\bb_T \|X_{t{\land \tau_n}}\|_\tau^{2\alpha}}\big|\F_0\right]\\ \leq&\left(\E\big[R_n(T)^{-(\tt p-1)}\bigg|\F_0\big]\right)^{\ff{1}{\tt p}}\left(\E_{\Q_n}\left[\exp\left(\ff{\tt p\beta_T}{\tt p -1}\|X_{t{\land \tau_n}}\|_\tau^{2\alpha}\right)\bigg|\F_0\right] \right)^{\ff{\tt p -1}{\tt p}} \\
\leq& \exp\left(c+c\|{X_0}\|_\tau^{2\alpha}\right), \ \ t\in[0,T],\ n\ge 1.
\end{aligned}
\end{equation} By letting $n\to\infty$ we derive 
$$\E\big[\e^{\beta_T \|X_{t}\|_\tau^{2\alpha}}\big|\F_0\big]\le \exp\left(c+c\|{X_0}\|_\tau^{2\alpha}\right), \ \ t\in[0,T],$$
which implies \eqref{EXP2} for ${k}_T:=\ff{c}{\bb_T}$
by Jensen's inequality.
\end{proof}

\begin{proof}[Proof of Theorem \ref{TB1}] Let $X_0$ be an $\F_0$-measurable random variable in $\C_\tau$.
Take $\psi\in C_b^{\infty}([0,\infty))$ such that $0\leq \psi\le1$, $\psi(u)=1$ for $u\in[0,1]$, and $\psi(u)=0$ for $u\in[2,\infty)$. Define
\begin{align}\label{appro}
b^{(1)}_n(t,\xi)=b^{(1)}(t,\xi)\psi\left( \ff{\|\xi\|_\tau}{n}\right),\ \ t\geq0,\ \xi\in\C_\tau.
\end{align} By \eqref{BL2},
for any $n\geq 1$, $b^{(1)}_n$ is bounded and satisfies $(A_3')$. So, Lemma \ref{LB1} ensures that
the SDE
$$
\d X^n(t)=b^{(1)}_n(t,X^n_t)\d t+b^{(0)}(t,X^n(t))\d t+\si(t,X^n(t))\d W(t), \ \ X_0^n=X_0,\  t\in[0,T]$$
has a unique   strong solution.
Let $\tau_n$ be in \eqref{TNN}.
It suffices to prove that   for any $k\ge 2$ there exists a constant $c(T,k)>0$ such that
\beq\label{C*} \E\bigg[\sup_{t\in [0,T]}\|X_{t}^n\|_\tau^k\bigg|\F_0\bigg]\le c(T,k)\big(1+ \|X_0\|_\tau^k\big),\ \ n\ge 1.\end{equation}
This implies $\P(\tau_n=T)\to 1$ as $n\to\infty$ and
$$X(t):= 1_{\{t=0\}}X(0)+\sum_{n=1}^\infty X_n(t)1_{(\tau_{n-1},\tau_n]}(t),\ \ t\in [0,T]$$
is the unique solution to \eqref{E-2} and \eqref{EV0} holds, where $\tau_0:=0$. To prove \eqref{C*}, we use Zvonkin's transform.

For  $\lambda\ge \ll_0$ and $u$ solving the   PDE \eqref{PDE1} such that \eqref{LAMU} and \eqref{UVV} hold, let
$$\Theta(t,x):=x+u(t,x),\ \   Y^n(t):=\Theta(t,X^n(t)), \ \ t\in[0,T].$$ By It\^o's formula, we obtain
\begin{equation}\begin{aligned}\label{1DY}
\d Y^n(t)=&\{\nn\Theta(t,X^n(t))b^{(1)}_n(t,X_t^n)+\lambda u(t,X^n(t))\}\d t\\
&+\{(\nn\Theta\si)(t,X^n(t))\}\d W(t).
\end{aligned}\end{equation}
Let
$$Z(t)=\e^{k\tau t}|Y^n(t)|^k+1.$$ By $(A_1)$--$(A_3)$, \eqref{UVV} and It\^o's formula, we find a constant
$c_1>0$ such that
\begin{align}\label{zj1}
\d Z(t)\leq
&c_1\left(1+\|Y^n_0\|_\tau^{k}\right)\d t+c_1\sup_{s\in[0,t]}Z(s) \d t+\d M(t),\ \ t\leq\tau_n,
\end{align}  where
$$\d M(t)=k\e^{k \tau t}|Y^n(t)|^{j-2}\<Y^n(t),\big(\{\nn\Theta\}\si\big)(t,X^n(t))\d W(t)\>.$$
By  the BDG inequality and the Young inequality, we find  constants $c_2,c_3>0$ such that
\begin{align*}
&\E\bigg[\sup_{s\in[0,t\land \tau_n]}M(s)\bigg|\F_0\bigg]\leq c_2\E\bigg[\int_{0}^{t}Z(s\land\tau_n)^2\,\d s\bigg|\F_0\bigg]^{\ff{1}{2}}\notag\\ \leq&\ff{1}{2}\E\bigg[\sup_{s\in[0,t\land \tau_n]}Z(s)\bigg|\F_0\bigg]+c_3\int_{0}^{t}\E\big[Z(s\land\tau_n)\big|\F_0\big]\,\d s,\ \ t\in[0,T].
\end{align*}
Combining this with \eqref{EPR}, \eqref{zj1} and
$$|X^n(t)-Y^n(t)|\leq\|u\|_\8<\8,$$ we may  apply  Gronwall's inequality to derive the desired \eqref{C*} for some constant $c=c(T,k)>0.$ 	 \end{proof}

\begin{proof}[Proof of Theorem \ref{TN}]  Let $\Theta (t,x):=x+u(t,x)$ for $u$ solving the   PDE \eqref{PDE1} such that \eqref{LAMU} and \eqref{UVV} holds, and denote
\beg{align*}
&Y^\xi(t):=\Theta(t,X^\xi(t)),\ \ \  Y^\eta(t):=\Theta(t,X^\eta(t)),\\
&\Phi(t):=\e^{\tau t}(X^\xi(t)-X^\eta(t)),\ \ \Xi(t):=\e^{2k\tau t}|Y^\xi(t)-Y^\eta(t)|^{2k},\\
&	g_1(t):= \M\|\nn^2u\|(t,X^\xi(t))+ \M\|\nn^2u\|(t,X^\eta(t)),\\
& g_2(t):=g_1(t)+ \M\|\nn\si\|(t,X^\xi(t))+ \M\|\nn\si\|(t,X^\eta(t)),\\
&A(t):= \int_{0}^{t}\left(1+g_2(s)^2 +g_1(s)\|X^\xi_s\|_\tau^{\alpha}\right)\d s,\ \ \  t\in [0,T].
\end{align*}
By Lemma \ref{Hardy}, Lemma \ref{KEs}, Corollary \ref{cor1} and   Holder's inequality,
for any $\bb>0$ and $\vv\in (0,1)$ we find a constants $c(\bb,\vv)>0$  such that
\begin{align}\label{Edelta}
\E\left[\e^{\bb {A}(T)}\right]\leq \e^{c(\bb,\vv)+\vv(\|\xi\|_\tau^{2\alpha}+\|\eta\|_\tau^{2\alpha})}.
\end{align}
By $(A_1)$, $(A_2)$, $(A_3')$, \eqref{UVV} and  Lemma \ref{Hardy}, we find a constant $k_0>0$ such that
\begin{align*}
&\left|[\nn\Theta(t,X^\xi(t))]b^{(1)}(t,X^\xi_t)-[\nn\Theta(t,X^\eta(t))]b^{(1)}(t,X^\eta_t)\right|\\
&\leq k_0\|X^\xi_t-X_t^\eta\|_\tau+ k_0\left(1+g_1(t)\right)\|X_t^\xi\|_\tau^\alpha|X^\xi(t)-X^\eta(t)|,\end{align*}
$$\left\|\left([\nn\Theta]\si\right)(t,X^\xi(t))-\left( [\nn\Theta]\si\right)(t,X^\eta(t))\right\|^2_{HS}
\leq k_0\left(1+g_2 (t)^2\right)|X^\xi(t)-X^\eta(t)|^2.$$
Combining this with \eqref{Theta}, \eqref{EPR},  \eqref{1DY} and It\^o's formula, for any $k\geq1$, we find a martingale $M(t)$ and a constant $c_1(T,k)>0$ such that for any $t\in[0,T]$,
\begin{align*}
\d\Xi(t)\leq {c_1(T,k)}\left\{\Xi(t)\d {A}(t)+\sup_{s\in[0,t]}\Xi(s)\d t +\|Y_0^\xi-Y_0^\eta\|^{2k}_\tau\d t\right\}+\d M(t).
\end{align*}
Using the It\^o formula, the stochastic Gr\"onwall inequality in \cite[Lemma A.5]{B18} and \eqref{Theta},  there exists a constant $c_2(T,k)>0$ such that
$$
\left(\E\left[\sup_{s\in[0,t]}\left(\e^{-{c_1(T,k)}{A}(s)}|\Phi(s)|^{2k}\right)^{\ff 2 3}\right]\right)^{\ff 3 2}\leq c_2(T,k)\|\xi-\eta\|_\tau^{2k}, \ \ t\in[0,T].
$$
By combining this   with \eqref{Edelta} and   H\"older's inequality, we find a constant   $c_3(\vv,T,k)>0$, such that
\begin{equation}
\begin{aligned}
&\E\left[\sup_{s\in[0,t]}|\Phi(s)|^k\right] \leq\E\left[\e^{\ff 1 2{c_1(T,k)} A(t)}\left(\sup_{s\in[0,t]}
\e^{-\ff 1 2 {c_1(T,k)}A(s)}|\Phi(s)|^{k}\right)\right]\\
&\leq \left(\E\left[\e^{2{c_1(T,k)}A (t)}\right]\right)^{\ff 1 4}\left( \E\left[\sup_{s\in[0,t]}\left(\e^{-{c_1(T,k)}{A}(s)}|\Phi(s)|^{2k}\right)^{\ff 2 3}\right]\right)^{\ff{3}{4}}\\
&\leq c_3(\vv,T,k)\e^{\vv
(\|\xi\|_\tau^{2\alpha}+\|\eta\|_\tau^{2\alpha})}\|\xi-\eta\|^k_\tau, \ \ t\in[0,T].
\end{aligned}
\end{equation}
This  together with \eqref{EPR}  yields the desired estimate \eqref{E01} for some constant $c >0$ depending on $\vv,T$ and $k$.
\end{proof}

\section{Proof of Theorem \ref{T01}} It suffices to prove the assertions up to any fixed time  $T\in (0,\infty).$   To this end, we   use the fixed point argument as in \cite{HW2}.

Let   $X_0$ be an $\F_0$-measurable random variable on $\C_\tau$ with $\gg:=\L_{X_0}\in \scr P_{k,\e}^\alpha(\C_\tau).$
For any constant   $N\geq 2$, let
\begin{align*}
\mathcal C^{\gamma,N}_k:=\left\{\mu\in C_b^w([0,T];\scr P_{k,\e}^\alpha{(\C_\tau)}):\mu_0=\gamma, \sup_{t\in[0,T]}\e^{-Nt}(1+\mu_t(\|\cdot\|_\tau^k))\leq N\right\},
\end{align*}
{where	\begin{equation*}\label{Cw}
\begin{split}
&C^{w}([0,T];\scr P_{k,\e}^\alpha(\C_\tau)):=\{\mu:[0,T]\to\scr P_{k,\e}^\alpha(\C_\tau)) \text{ is weakly continuous}\},\\
&C^{w}_b([0,T];\scr P_{k,\e}^\alpha(\C_\tau)):=\Big\{\mu\in C^{w}([0,T];\scr P_{k,\e}^\alpha(\C_\tau)):\sup_{t\in[0,T]}\W_{k,var}(\mu_t,\mu_0)<\8\Big\}.
\end{split}
\end{equation*}}
Then as $N\uparrow\8$,
\begin{align}\label{cn}
\mathcal C^{\gamma,N}_k\uparrow\mathcal C^\gamma_k:=\left\{\mu\in C_b^w([0,T];\scr P_{k,\e}^\alpha{(\C_\tau)}):\mu_0=\gamma\right\}.
\end{align}
According to Theorem \ref{TB1} and Corollary \ref{cor1}, under the assumptions $(H_1)$--$(H_2)$, for any $\mu\in\mathcal C_k^\gamma$, the SDE
\begin{align}\label{E-3}
\d X^\mu(t)=b(t,X_t^\mu,\mu_t)\d t+\si(t,X^\mu(t))\d W(t), \ \ t\in[0,T],  X_0^\mu =X_0
\end{align}
has a unique segment  solution with
$$\Phi_\cdot^\gamma\mu:=\L_{X_\cdot^\mu}\in\mathcal C_k^\gg.$$
Then the well-posedness of \eqref{E-1} follows if the map $\Phi^{\gamma}$ has a unique fixed point in $\mathcal C^\gamma_k$. To this end, we need to verify that {there exists a constant $N_0\ge 2$ such that for any $N\ge N_0,$ the following two assertions hold:}
\beg{enumerate}
\item[(a)] 
$\Phi^\gg:\mathcal C_k^{\gg,N}\to\mathcal C_k^{\gg,N},$ i.e.
\begin{align}\label{p1}
\sup_{t\in[0,T]}\e^{-Nt}\left(1+\|\Phi^\gamma_t\mu\|_k^k\right)\leq N,\ \ \mu\in\mathcal C^{\gamma,N}_k.
\end{align}
\item[(b)]$\Phi^\gg$ has a unique fixed point in $\mathcal C_k^{\gg,N}.$
\end{enumerate}
Once these two assertions are confirmed, $\Phi^\gg$ has a unique fixed point $\mu\in \mathcal C_k^\gg$, and
$X_t=X_t^\mu$ is the unique segment solution of  \eqref{E-1} with initial value $X_0$,
so that  \eqref{xn} follows from \eqref{xn2}.

In the following two subsections, we prove assertions (a) and (b) respectively.

\subsection{Proof of (a)} It suffices to   prove  \eqref{p1} for $k>0$ and large $N$.
By {Lemma \ref{KEs}},   $(H_1)$-$(H_2)$ implies that for some constant $c_1>0$  we have
\begin{align}\label{ke3}
\E\left[\exp \left(\int_0^T|f_0(s,X^\mu(s))|^2\,\d s\right)  \right]\leq c_1\E[\e^{\vv\|X_0^\mu\|_\tau^{2\alpha}}]=c_1\gg(\e^{\vv\|\cdot\|_\tau^{2\aa}})<\infty.
\end{align}
Hence,
$$\sup_{\mu\in \mathcal C_k^\gg}\E\bigg(\int_0^T |f_0(s,X^\mu(s))|^2\,\d s\bigg)^k<\infty.$$
Combining this with  $(H_1)$-$(H_2)$, $\gamma:=\L_{X_0}\in\scr P_{k,\e}^\alpha(\C_\tau)$,  the It\^o formula and the BDG inequality, we find constants $c_2,c_3>0$ such that
\begin{equation}\label{xk1}
\begin{aligned}
&\E\left[\sup_{s\in[0,t]}\left(1+\e^{k\tau s}|X^\mu(s)|^k\right)\right]\leq c_2\E (1+|X(0)|^k)\\&+c_2\E \left(\int_0^t
\e^{\tau s}\left\{\|X^\mu_s\|_\tau+|f_0(s,X^\mu(s))|+\|\mu_s\|_k\right\}\,\d s\right)^k\\
&\leq c_3+c_3\E \left(\int_0^t\left\{\e^{2\tau s}\|X_s^\mu\|_\tau^2+\e^{2\tau s}\|\mu_s\|_k^2\right\}\d s\right)^{\ff k 2},\ \ t\in[0,T].
\end{aligned}
\end{equation}
Below we verify \eqref{p1} by considering $k\ge 2$ and $k\in (0,2)$ respectively.

\paragraph{$(a_1)$} Let $k\geq2$. By \eqref{EPR} and \eqref{xk1}, there exists a constant $c_4>0$ such that for any $t\in[0,T]$,
\begin{equation*}
\begin{aligned}
&\E\left[ \sup_{s\in[0,t]}\left(1+\e^{k\tau s}\|X_s^\mu\|_\tau^k\right)\right] \leq c_4+c_4\int_0^t\left\{\E\left[\e^{k\tau s}\|X_s^\mu\|_\tau^k\right]+\e^{k\tau s}\|\mu_s\|_k^k\right\}\,\d s.
\end{aligned}
\end{equation*}
Using Gr\"onwall's lemma, it holds
\begin{align*}
\E\left[ \sup_{s\in[0,t]}\left(1+\e^{k\tau s}\|X_s^\mu\|_\tau^k\right)\right] \leq c_4\e^{c_4 T}\left(1+\int_0^t \e^{k\tau s }\|\mu_s\|_k^k\,\d s\right),\ \ t\in[0,T].
\end{align*}
Hence, by noting that  $\mu\in\mathcal C^{\gamma,N}_k$, we can find a constant $c_5$ such that
\begin{equation}\label{muk1}
\begin{split}
&\E\left[1+\|X_t^\mu\|_\tau^k\right]\leq1+\e^{-k\tau t}	\E\left[ \sup_{s\in[0,t]}\left(1+\e^{k\tau s}\|X_s^\mu\|_\tau^k\right)\right]\\ &\leq c_5+c_5\int_0^t\e^{-k\tau (t-s)}\|\mu_s\|_k^k\,\d s\\
&\leq c_5+c_5N\e^{Nt}\int_0^t\e^{-N(t-s)}\d s\leq 2c_5\e^{Nt},\ \ t\in[0,T].
\end{split}
\end{equation}
Taking $N_0=2c_5$, we derive
\begin{equation*}
\begin{split}
&\sup_{t\in[0,T]}\e^{-Nt}\left(1+\|\Phi_t^\gamma\mu\|_k^k\right)=	\sup_{t\in[0,T]}\e^{-Nt}\E\left(1+\|X_t^\mu\|_\tau^k\right){\le N}, \quad  N\geq N_0,\ \mu\in\mathcal C^{\gamma,N}_k.
\end{split}
\end{equation*}

\paragraph{$(a_2)$} Let $k\in(0,2)$. By \eqref{EPR} and \eqref{xk1}, we find constants $c_6,c_7,c_8>0$ such that
\begin{equation*}\label{xk3}
\begin{split}
&U_t:=\E\left[\sup_{s\in[0,t]}\left(1+\e^{k\tau s}\|X_s^\mu\|_\tau^k\right)\right]\leq c_6+c_6\E \left(\int_0^t\left\{\e^{2\tau s}\|X_s^\mu\|_\tau^2+\e^{2\tau s}\|\mu_s\|_k^2\right\}\,\d s\right)^{\ff k 2} ,\\
&\leq c_7+c_7\left(\int_0^t\e^{2\tau s}\|\mu_s\|_k^2\,\d s\right)^{\ff k 2}
+c_7\E\left\{\left[\sup_{s\in[0,t]}\e^{k\tau s}\|X_s^\mu\|_\tau^k\right]^{1-\ff k 2}\left(\int_0^t\e^{k\tau s}\|X_s^\mu\|_\tau^k\,\d s\right)^{\ff k 2}\right\}\\
&\leq c_7+c_7\left(\int_0^t\e^{2\tau s}\|\mu_s\|_k^2\,\d s\right)^{\ff k 2}+\ff 1 2U_t+c_8\int_0^t U_s\,\d s,\ \ t\in[0,T],
\end{split}
\end{equation*}
then Gr\"onwall's lemma implies that 
\begin{align*}
U_t\leq 2c_7\e^{2 c_8T}\left\{1+\left(\int_0^t \e^{2\tau s}\|\mu_s\|_k^2\,\d s\right)^{\ff k 2}\right\},\ \ t\in[0,T].
\end{align*}
Thus, there exist constants $c_9,c_{10}>0$ such that for any $\mu\in\mathcal C^{\gamma,N}_k$,
\begin{equation}\label{muk2}
\begin{split}
&\E\left(1+\|X_t^\mu\|_\tau^k\right)\leq 1+\e^{-k \tau t}U_t\leq c_9+c_9\left(\int_0^t\e^{-2\tau(t-s)}\|\mu_s\|_k^2\,\d s\right)^{\ff k 2}\\
&\leq c_9+c_9Ne^{Nt}\left(\int_0^t\e^{-2N(t-s)}\,\d s\right)^{\ff k 2}\leq c_9+c_{10}N^{1-\ff k 2}\e^{Nt},\ \ t\in[0,T].
\end{split}
\end{equation}
Thus, there exists a constant $N_0>2$ such that for any $N\geq N_0$,
\begin{equation*}
\begin{split}
&\sup_{t\in[0,T]}\e^{-Nt}\left(1+\|\Phi_t^\gamma\mu\|_k^k\right)=	\sup_{t\in[0,T]}\e^{-Nt}\E\left(1+\|X_t^\mu\|_\tau^k\right)\\
&\leq c_9+c_{10}N^{1-\ff k 2}\leq N,\ \ \mu\in\mathcal C^{\gamma,N}_k.
\end{split}
\end{equation*}

\subsection{Proof of (b)} To ensure that $\Phi^\gg$ has a unique fixed point in $\mathcal C_k^{\gg,N}$ for $N\ge N_0$, we shall prove that it is contractive under a complete metric.

For any $\t>0$, let
\begin{align*}
&\W_{k,\t,var}(\mu,\nu):=\sup_{t\in[0,T]}\e^{-\t t}\|\mu_t-\nu_t\|_{k,var},\\
&\W_{k,\t}(\mu,\nu):=\sup_{t\in[0,T]}\e^{-\t t}\W_k(\mu_t,\nu_t),
\ \ \mu,\nu\in\mathcal C^{\gamma,N}_k.
\end{align*}
Then   the metric
$\tt\W_{k,\t,var}=\W_{k,\t,var}+\W_{k,\t}$ is complete on $\mathcal C_k^{\gg,N}$.
We intend to show that $\Phi^\gamma$ is contractive in $C^{\gamma,N}_k$ under the metric $\tt\W_{k,\t,var}$ when $\t$ is large enough.

By Theorem \ref{TB1}, $(H_1)$-$(H_2)$,   \eqref{p1} and $\gg:=\L_{X_0}\in \scr P_k(\C_\tau)$, we find a constant $C_0(N)>0$ such that
\begin{align}\label{xn2}
\sup_{\mu\in C_k^{\gg,N}} \E\Big(\E\Big[\sup_{t\in[0,T]} \|X^\mu_t\|_\tau^{2k}\Big|\F_0 \Big]\Big)^{\ff 1 2}\leq C_0(N)\E \left(1+\|X_0\|_\tau^k\right)<\infty.
\end{align}
Next, since $\mu,\nu\in\mathcal C^{\gamma,N}_k$, we find a constant $C_0(N)>0$ such that 
\begin{align*}
\sup_{t\in[0,T]}\W_{k,var}(\mu_t,\nu_t)\leq C_0(N).
\end{align*}
Thus, by $(H_1)$ and $(H_2)$, we find a constant $C_1(N)>0$ such that for any $\mu,\nu\in\mathcal C^{\gamma,N}_k$,
$$\zeta_s:=\left\{\si^*(\si\si^*)^{-1}\right\}(s,X^\mu(s))\left[b^{(1)}(s,X_s^\mu,\mu_s)-b^{(1)}(s,X_s^\mu,\nu_s)\right],\ \ s\in[0,T]$$
satisfies \begin{align}\label{ztn}
|\zeta_s|^2\leq C_1(N) H(s)\left(1\wedge\W_{k,var}(\mu_s,\nu_s)^2\right),\ \ s\in[0,T].
\end{align}
Recalling that $H\in L_{loc}^1([0,\8);(0,\8))$,   we have \begin{align*}
\int_0^tH(s)\left(1\wedge\W_{k,var}(\mu_s,\nu_s)^2\right)\,\d s\leq \int_0^tH(s)\,\d s<\8,
\end{align*}
then by $(H_2)$ and Girsanov's  theorem, for any $t\in[0,T]$,
\begin{align}\label{rt1}
R_t:=\exp\left(\int_0^t\<\zeta_s,\d W(s)\>-\ff 1 2\int_0^t|\zeta_s|^2\,\d s \right)
\end{align}
is a martingale and
$$\tt W_u:=W_u-\int_0^u\zeta_s\,\d s, \ \ u\in[0,t]$$
is a Brownian motion under the probability measure $\Q_t:=R(t)\P$.
Since $\e^s-1\leq s\e^s$ for $s\geq0$, we find   constants $C_2(N),C_3(N)>0$ such that
\beq\beg{split}\label{N1}  &\E\left(|R_t-1|^2\big|\F_0\right)
=\E\left[\e^{2\int_0^t\<\zeta_s,\d W(s)\>-2\int_0^t|\zeta_s|^2\,\d s+\int_0^t|\zeta_s|^2\,\d s}\bigg|\F_0\right]-1\\
&\le \e^{C_1(N)\int_0^t H(s)\left(1\wedge\W_{k,var}(\mu_s,\nu_s)^2\right)\,\d s}-1
\le C_2(N)\int_0^t H(s)\W_{k,var}(\mu_s,\nu_s)^2\,\d s\\
&\leq C_3(N)\e^{2\t t}\tt\W_{k,\t,var}(\mu,\nu)^2 \int_0^tH(s)\e^{-2\t(t-s) }\,\d s ,\ \ \mu,\nu\in\mathcal C^{\gamma,N}_k.\end{split}\end{equation}
Reformulating \eqref{E-3} as
\begin{align}\label{E-4}
\d X^\mu(r)=b(r,X_r^\mu,\nu_r)\d r+\si(r,X^\mu(r))\d\tt W(r), \ \ \L_{X_0^\mu}=\gamma, r\in[0,t].
\end{align}
By the uniqueness, we obtain
$$\Phi_t^\gamma\nu=\L_{X_t^\nu}=\L_{X_t^\mu|\Q_t},$$
where $\L_{X_t^\mu|\Q_t}$ stands for the distribution of $X_t^\mu$ under $\Q_t$. Thus, by \eqref{xn2}, \eqref{ztn}, \eqref{N1} and H\"older's inequality, we find constant $C_4(N)>0$ such that
\begin{align*}
&\|\Phi^\gamma_t\mu-\Phi^\gamma_t\nu\|_{k,var}=\sup_{|f|\leq 1+\|\cdot\|_\tau^k}\left|\E\left[   f(X_t^\nu)-f(X_t^\mu)\right]\right| \\
&=\sup_{|f|\leq 1+\|\cdot\|_\tau^k}\left|\E\left[(R_t-1)  f(X_t^\mu)\right]\right|\leq \E\left[(1+\|X_t^\mu\|_\tau^k)|R_t-1|\right]\\
&\leq \E\left[\left\{\E\left((1+\|X_t^\mu\|_\tau^k)^2\big|\F_0\right)\right\}^{\ff 1 2}\left\{\E\left(|R_t-1|^2\big|\F_0\right)\right\}^{\ff 1 2}\right]\\
&\leq C_4(N)\e^{\t t}\tt\W_{k,\t,var}(\mu,\nu)\left( \int_0^tH(s)\e^{-2\t(t-s) }\,\d s\right)  ^{\ff 1 2},\ \ \mu,\nu\in\mathcal C^{\gamma,N}_k.
\end{align*}
Therefore, \begin{equation}\label{cons1}
\begin{aligned}
&\W_{k,\t,var}(\Phi^\gamma\mu,\Phi^\gamma\nu)=\sup_{t\in[0,T]}\e^{-\t t}\|\Phi^\gamma_t\mu-\Phi^\gamma_t\nu\|_{k,var}\\
&\leq C_4(N)\sup_{t\in[0,T]}\left( \int_0^tH(s)\e^{-2\t(t-s) }\,\d s\right)  ^{\ff 1 2}\tt\W_{k,\t,var}(\mu,\nu),\ \ \mu,\nu\in\mathcal C^{\gamma,N}_k.
\end{aligned}
\end{equation}
We will finish the proof by considering $k\le 1$ and $k>1$ respectively.

\paragraph{$(b_1)$} Let $k\le 1$. By 
\eqref{vark} and \eqref{cons1},   we obtain
\beg{align*} &\tt\W_{k,\t,var}(\Phi^\gamma\mu,\Phi^\gamma\nu)\le (1+c) \W_{k,\t,var}(\Phi^\gamma\mu,\Phi^\gamma\nu)\\
&\le (1+c)C_4(N) \left( \int_0^tH(s)\e^{-2\t(t-s) }\,\d s\right)  ^{\ff 1 2}\tt\W_{k,\t,var}(\mu,\nu),\ \ \mu,\nu\in\mathcal C^{\gamma,N}_k.\end{align*}
Noting that $H\in L_{loc}^1([0,\8);(0,\8))$ yields that 
\beq\label{LM} \lim_{\theta\to\infty} \sup_{t\in[0,T]}\left( \int_0^tH(s)\e^{-2\t(t-s) }\,\d s\right)  ^{\ff 1 2}=0,\end{equation}
we may choose $\theta>0$   large enough such that
$$\tt \W_{k,\t,var}(\Phi^\gamma\mu,\Phi^\gamma\nu)\le \ff 1 2 \tt\W_{k,\t,var}(\mu,\nu),\ \ \mu,\nu\in\mathcal C^{\gamma,N}_k.$$
This together with (a) implies that $\Phi^\gamma$ has a unique fixed point in $C^{\gamma,N}_k.$

\paragraph{$(b_2)$} Let $k>1$. Let $\Theta(t,\cdot):=id+u(t,\cdot)$ for  $u$ solving \eqref{PDE1}
such that \eqref{LAMU} and \eqref{UVV} holds.
Let
\beg{align*}
&Z(t):=   {X^\mu(t)+ \Theta(t,X^\mu(t))- X^\nu(t)- \Theta(t,X^\nu(t))},\\
&g_1(t):= \M\|\nn^2u\|(t,X^\mu(t))+ \M\|\nn^2u\|(t,X^\nu(t)),\\
& g_2(t):=g_1(t)+ \M\|\nn\si\|(t,X^\mu(t))+ \M\|\nn\si\|(t,X^\nu(t)),
\ \ \  t\in [0,T].
\end{align*}
By $(H_1)$-$(H_3)$, \eqref{UVV}, \eqref{Theta} and   It\^o's formula, we find a constant $c_1(k)>0$ such that
\begin{equation}\label{2jz}
\begin{aligned}
&\d \big\{\e^{2k\tau t}|Z(t)|^{2k}\big\}\leq  c_1(k)\left\{\e^{2k\tau t}\|Z_t\|_\tau^{2k}+H(t)\e^{2k\tau t}\W_{k,var}(\mu_t,\nu_t)^{2k}\right\}\d t\\
&+\e^{2k\tau t}|Z(t)|^{2k}\d A(t)+\d M(t), \ \ t\in[0,T],
\end{aligned}
\end{equation}
where \begin{equation}\label{mt1}\begin{split}
&A(t):= {c_1(k)}\int_{0}^{t}\left(1+g_2(s)^2 +g_1(s)\|X^\mu_s\|_\tau^{\alpha}\right)\,\d s,\\
&\d \<M\>(t)\leq  c_1(k)\left( 1+g_2(t)\right)^2\e^{4k\tau t}|Z(t)|^{4k}\,\d t.
\end{split}
\end{equation} Combining this with   \eqref{Theta}, \eqref{EPR},   the stochastic Gr\"onwall inequality in \cite[Lemma A.5]{B18} {, Lemma \ref{KEs}, Corollary \ref{cor1} and the fact that $X_0^\mu=X_0^\nu=X_0$,} we find a constant $c_2(k)>0$ such that
\begin{equation*}
\begin{split}
& \E\left[\sup_{s\in[0,t]}\e^{k\tau s}\|X^\mu_s-X^\nu_s\|_\tau^{k}\right]{= \E\left[\sup_{s\in[0,t]}\e^{k\tau s}|X^\mu(s)-X^\nu(s)|^{k}\right]}\\
&\leq\left(\E\left[\e^{\ff{3}{2}A(t)}\right]\right)^{\ff {1}3}\left(\E\left[\sup_{s\in[0,t]}\e^{-A(s)}\e^{2k\tau s}{|X^\mu(s)-X^\nu(s)|}^{2k}\right]^{\ff 3 4}\right)^{\ff 23}\\
&\leq c_2(k)\left(\int_0^tH(s)\e^{2k\tau s}\W_{k,var}(\mu_s,\nu_s)^{2k}\,\d s\right)^{\ff 1 2}.
\end{split}
\end{equation*}
Since  $\Phi_t^{\gamma}\mu=\L_{X^\mu_t}$ and $\Phi_t^{\gamma}\nu=\L_{X^\nu_t},$ by the inequalities above, we find a constant
$c_3(k)>0$ such that
\begin{align*}
&\W_{k,\t}(\Phi^\gamma\mu,\Phi^\gamma\nu)=\sup_{t\in[0,T]}\e^{-\t t}\W_{k}(\Phi_t^\gamma\mu,\Phi_t^\gamma\nu)\leq \sup_{t\in[0,T]}\e^{-\t t}\left( \E\left[\|X_t^\mu-X_t^\nu\|_\tau^k\right]\right) ^{\ff 1 k}\\
&\leq c_2(k)\sup_{t\in[0,T]}\left(\int_0^tH(s)\e^{-2k\t t}\W_{k,var}(\mu_s,\nu_s)^{2k}\,\d s\right)^{\ff 1 {2k}}\\
&		\leq c_3(k) \tt\W_{k,\t,var}(\mu,\nu)\sup_{t\in[0,T]}\left(\int_0^tH(s)\e^{-2k\t(t-s)}\,\d s\right)^{\ff 1 {2k}},\ \ \mu,\nu\in\mathcal C^{\gamma,N}_k,\ \theta>0.
\end{align*}
Combining this with \eqref{cons1} and \eqref{LM}, we may choose $\theta>0$ such that
\begin{align}
\tt\W_{k,\t,var}(\Phi^\gamma\mu,\Phi^\gamma\nu)\leq \ff 1 2 \tt\W_{k,\t,var}(\mu,\nu),\ \ \mu,\nu\in\mathcal C^{\gamma,N}_k.
\end{align}
Thus, $\Phi^\gamma$ has a unique fixed point in $\mathcal C^{\gamma,N}_{k}$.

\section{  Proof of Theorem \ref{T02}}
\begin{proof}[Proof of Theorem \ref{T02}(1)]
The well-posedness can be proved by using a standard fixed point theorem. Let
$X_0$ be $\F_0$-measurable with $\gg:=\L_{X_0}\in \scr P_2(\C_\tau)$. For any constant $\theta,T>0,$ the path space
$$\mathcal C_2^{\gg}:=\big\{\mu\in C^w([0,T]; \scr P_2(\C_\tau)):\ \mu_0=\gg\big\}$$
is complete under the metric
$$\W_{2,\theta}(\mu,\nu):=\sup_{t\in [0,T]}\e^{-\theta t}\W_2(\mu_t,\nu_t).$$ 
According to Theorem \ref{TB2}, $({H}')$ implies that for any $\mu\in \mathcal C_2^\gg$, the SDE 
\beq\label{XM} \d X^\mu(t)= b(t,X_t^\mu,\mu_t)\d t + \si(t,X^\mu_t)\d W(t),\ \ t\in [0,T], X_0^\mu=X_0\end{equation}
is well-posed with 
$$\Phi_\cdot^\gg\mu:= \L_{X_\cdot^\mu}\in \mathcal C_2^\gg.$$
For the well-posedness of \eqref{E-1}, it suffices to show that $ \Phi^\gg$ has a unique fixed point in $\mathcal C_2^\gg$.

Now, for $\nu_0\in \scr P_2(\C_\tau)$, for simplicity, we assume that we can choose $\F_0$-measurable random variables
$X_0^\mu$ and $X_0^\nu$ on $\C_\tau$ such that
\beq\label{M1} \W_2(\mu_0,\nu_0)^2= \E\big[\|X_0^\mu-X_0^\nu\|_\tau^2\big].\end{equation}
Otherwise, in the following it suffices to first replace $(X_0^\mu,X_0^\nu)$ be the sequences
$(X_0^{\mu,n}, X_0^{\nu,n})$ such that
$$n^{-1}+ \W_2(\mu_0,\nu_0)^2\ge \E\big[\|X_0^{\mu,n}-X_0^{\nu,n}\|_\tau^2\big],\ \ n\ge 1,$$
then let $n\to\infty$.

For any $\nu \in \mathcal C_2^{\nu_0}$ which is defined as $\mathcal C_2^\gg$ for $\nu_0$ replacing $\gg$, 
let $X_t^\nu$ be the unique solution to
\begin{equation}
\label{XM2} \d X^\nu(t)= b(t,X_t^\nu,\nu_t)\d t + \si(t,X^\nu_t)\d W(t),\ \ t\in [0,T]
\end{equation} with initial value $X_0^\nu$. 
By $(H')$ and It\^o's formula, we obtain
$$\d{\e^{2\tau t}}|X^\mu(t)-X^\nu(t)|^2\le K{\e^{2\tau t}}\big(\|X_t^\mu-X_t^\nu\|_\tau^2+\W_2(\mu_t,\nu_t)^2\big)\d t 
+\d M(t)$$
{for some constant $K>0$, }where $M_t$ is a martingale with 
$$\d \<M\>(t)\le K {\e^{4\tau t}}\|X^\mu_t-X_t^\nu\|_\tau^4 \d t.$$
Combining this with \eqref{EPR},  { It\^o's isometry and Young's inequality,} we find a constant $c_1>0$ such that
\beg{align*}&\E\bigg[\sup_{s\in [0,t]}{\e^{2\tau s}} \|X_s^\mu-X_s^\nu\|_\tau^2\bigg]
\le  c_1 \E\|X_0^\mu-X_0^\nu\|_\tau^2 + c_1\E\bigg[\bigg(\int_0^t {\e^{4\tau s}}\|X_s^\mu-X_s^\nu\|_\tau^4\,\d s\bigg)^{\ff 1 2}\bigg] \\
&\qquad \qquad\qquad \qquad\qquad \qquad +c_1 \int_0^t{\e^{2\tau s}}\E\big[\|X_s^\mu-X_s^\nu\|_\tau^2+\W_2(\mu_s,\nu_s)^2\big]\,\d s\\
& \le c_1\E\|X_0^\mu-X_0^\nu\|_\tau^2 + \Big(c_1+\ff {c_1^2}2 \Big) \int_0^t{\e^{2\tau s}}\E\big[\|X_s^\mu-X_s^\nu\|_\tau^2+ \W_2(\mu_s,\nu_s)^2\big]\,\d s\\
&\qquad \qquad\qquad \qquad\qquad \qquad+ \ff 1 2\E\bigg[\sup_{s\in [0,t]} {\e^{2\tau s}}\|X_s^\mu-X_s^\nu\|_\tau^2\bigg],\ \ t\in [0,T].\end{align*}
By an approximation argument with stopping times, we may and do assume that 
$$\E\bigg[\sup_{s\in [0,t]} {\e^{2\tau s}}\|X_s^\mu-X_s^\nu\|_\tau^2\bigg]<\infty,$$
so that the above estimate 
together with \eqref{M1} and Gr\"onwall's inequality yields
\beq\label{M2}\E\bigg[\sup_{s\in [0,t]} \|X_s^\mu-X_s^\nu\|_\tau^2\bigg]\le c_2(T) \W_2(\mu_0,\nu_0)^2+ c_2(T)\int_0^t \W_2(\mu_s,\nu_s)^2\,\d s\end{equation} for any $t\in[0,T]$ and some constant $c_2(T)>0$.
In particular, when $\mu_0=\nu_0=\gg$, we derive
\beg{align*}&\W_{2,\theta}(\Phi^\gg\mu,\Phi^\gg\nu)^2\le \sup_{t\in [0,T]}\e^{-\theta t} \E\big[  \|X_t^\mu-X_t^\nu\|_\tau^2\big]\\
&\le c_2(T) \sup_{t\in [0,T]}\e^{-\theta t} \int_0^t \W_2(\mu_s,\nu_s)^2\,\d s  
\le  \ff{c_2}\theta \W_{2,\theta}(\mu,\nu)^2,\ \ \mu,\nu\in \mathcal C_2^\gg.\end{align*}
So, when $\theta>c_2$, $\Phi^\gg$ is contractive in the complete metric space $(\mathcal C_2^\gg, \W_{2,\theta})$,
hence it has a unique fixed point.

Moreover, letting $\mu_t=P_t^*\mu, \nu_t=P_t^*\nu$, we have $\L_{X_t^\mu}=P_t^*\mu$ and $\L_{X_t^\nu}=P_t^*\nu$, so that    \eqref{M2} and Gr\"onwall's inequality yield
\eqref{*N} for some constant $c>0$.
\end{proof}
Now, let $\tau_0\in (0,\tau)$ and $\|\si\|_\infty+\|\si^{-1}\|_\infty<\infty,$
it remains to verify \eqref{log1}, which implies \eqref{lip} through repeating the proof of \cite[Theorem 2.1(1)]{BWY19} for 
\beq\label{GM2}  E=\scr P_{2,\e}^\alpha(\C_\tau),\ \rho(\mu,\nu)=\W_{2}(\mu,\nu),\ \Gamma_t=c\e^{-\tau_0t},\end{equation}
and $\Lambda_t=c\e^{ct}$ in place of $\Lambda$.
\beg{proof}[Proof of \eqref{log1}]
Let $X_t^\mu$ be the unique solution to \eqref{E-1} with the initial distribution $\mu$ and denote $\mu_t=P_t^*\mu$, $\nu_t=P_t^*\nu$, 
\begin{align*}
&\bar\zeta_s:=\left\{\si^*(\si\si^*)^{-1}\right\}(s,X^\mu_s)\left[b(s,X^\mu_s,\mu_s)-b(s,X^\mu_s,\nu_s)\right],\\
&\bar R_s:=\exp\left[-\int_0^s\<\bar\zeta_u,\d W(u)\>-\ff 1 2\int_0^s|\bar\zeta_u|^2\,\d u\right],\ \ s\in[0,t].
\end{align*}
Then $(H')$ and \eqref{*N} implies that \begin{align}\label{bzt}
|\bar\zeta_s|^2\leq c_1K_2\W_2(\mu_s,\nu_s)^2\leq c_2K_2\e^{c_2 s}\W_2(\mu,\nu)^2
\end{align}for some constants $c_1,c_2>0$. Thus by Girsanov's theorem, $\bar R_s$ is a martingale and
$$\bar W(s)=W(s)+\int_0^s\bar\zeta_r\,\d r,\ \ s\in[0,t]$$
is a Brownian motion under the probability measure $\bar\P:=\bar R_t\P$. Then \eqref{E-1} can be reformulated as
\begin{align}\label{eb}
\d X^\mu(s)=b(s,X^\mu_s,\nu_s)\d s+\si(s,X^\mu_s)\d\bar W(s), \ \ \L_{X^\mu_0}=\mu,\ s\in[0,t].
\end{align}
For any $\kappa>\tau$, where $\tau>0$ is given in \eqref{CR}, consider the following SDE:
\begin{equation}\label{Ys}
\begin{aligned}
\d Y(s)=&\{ b(s, Y_s,\nu_s)+\kappa\si(s, Y_s) \si(s,  X^\mu_s)^{-1}( X^\mu(s)-  Y(s))\}\d s\\
&+ \si(s, Y_s)\d\bar  W(s),\ \ s\in[0,t],\ \  Y_0=X_0^\nu.
\end{aligned} 
\end{equation}
Let  \begin{align*}
&\tt\zeta_s:=\kappa\si(s, X^\mu_s)^{-1}(X^\mu(s)- Y(s)),\\
&\tt R_s:=\exp\left[-\int_0^s\<\tt\zeta_r,\d\bar W(r)\>-\ff 1 2\int_0^s|\tt\zeta_r|^2\,\d r\right],\ \ s\in[0,t].
\end{align*} Due to \cite[Lemma 3.2]{BWY19} with $\P$ replaced by $\bar\P$, under the assumption $(H')$, $$\tt W(s)=\bar W(s)+\int_0^s\tt\zeta_r\,\d r=W(s)+\int_0^s\left(\tt\zeta_r+\bar\zeta_r\right)\,\d r,\ \ s\in[0,t]$$  is a Brownian motion under the probability measure
$\Q=\tt R_t\bar\P=\tt R_t\bar R_t\P$. Hence \eqref{Ys} can be reformulated as
\begin{equation}
\d Y(s)=b(s,Y_s,\nu_s)\d s+\si(s,Y_s)\d\tt W(s),\ \ s\in[0,t],\ \  Y_0=X_0^\nu,
\end{equation}
which together with the uniqueness of \eqref{XM2} derives that $\L_{Y_t|\Q}=\L_{X_t^\nu}$.
Moreover, if we choose $\F_0$-measurable random variables
$X_0^\mu$ and $X_0^\nu$ on $\C_\tau$ such that
\beq\label{M3} \W_2(\mu,\nu)^2= \E\big[\|X_0^\mu-X_0^\nu\|_\tau^2\big].\end{equation}
By \eqref{eb}, $(H')$ and   the proof of \cite[Lemma 3.3]{BWY19},    for any $p>0$ and $\tau_0\in(0,\tau)$ we find  a constant  $\kk>\tau$ to define $Y_s$ in \eqref{Ys}  such that
\begin{equation}\label{ttq}
\E_{\Q}\left[\|X^\mu_t-Y_t\|_{\tau}^p|\F_0\right]\leq c\e^{-p\tau_0t}\|X^\mu_0-X^\nu_0\|_{\tau}^p,\ \ t\geq 0
\end{equation}  holds for some constant $c>0$.
Therefore, applying Young's inequality in \cite[Lemma 2.4]{ATW}, 
\begin{equation}\label{log3}
\begin{split}
&P_t\log f(\nu)=\E_{\Q}\left[\log f(Y_t)\right]=\E_{\Q}\left[\log f(X^\mu_t)\right]+\E_{\Q}\left[\log f(Y_t)-\log f(X^\mu_t)\right]\\
&\leq \E\left[\bar R_t\tt R_t \log f(X^\mu_t)\right]+\|\nn\log f\|_\8\E_{\Q}\|X_t^\mu-Y_t\|_\tau\\
&\leq \E\left[\bar R_t\tt R_t\log\left(\bar R_t\tt R_t\right)\right]+\log P_tf(\mu)+c\|\nn\log f\|_\8\e^{-\tau_0t}\W_{2}(\mu,\nu), \ \ t\geq 0
\end{split}
\end{equation}
holds for any $f\in\B_b^+(\C_r)$ with $\|\nn \log f\|_\8<\8$. Next, denote $R_t=\bar R_t\tt R_t$, it follows from \eqref{bzt}, \eqref{ttq} that for some positive constants $c_3,c_4$,
\begin{equation*}
\begin{aligned}
&\E\left[R_t\log R_t\right]\leq \ff 1 2\E_{\Q}\int_0^t|\bar\zeta_s+\tt\zeta_s|^2\,\d s
\leq \E_\Q\int_0^t|\bar\zeta_s|^2\,\d s+\E_\Q\int_0^t|\tt\zeta_s|^2\,\d s\\
&\leq c_3K_2\e^{c_3 t}\W_2(\mu, \nu)^2+c_3\int_0^t\E_{\Q}\|X^\mu_s-Y_s\|_{\tau}^2\,\d s\leq \left(c_3K_2\e^{c_3 t}+c_4\right)\W_2(\mu, \nu)^2.
\end{aligned}
\end{equation*}
Substituting this back into \eqref{log3} yields \eqref{log1}.
\end{proof}
\section*{Acknowledgements}
The authors would like to thank the associated editor and referees for their helpful comments and suggestions.

\end{document}